\newtheorem{theorem}{Theorem}%[section]
\newtheorem{definition}[theorem]{Definition}
\chardef\bslash=`\\ % p. 424, TeXbook
\newcommand{\dA}{{\dot A}}
\newcommand{\dom}{\text{\rm{Dom}}}
\newcommand{\calH}{{\mathcal H}}
\newcommand{\calR}{{\mathcal R}}
\def\bA{{\mathbb A}}      \def\dC{{\mathbb C}}
      \def\dR{{\mathbb R}}
   \def\cH{{\mathcal H}}
\def\RE{{\rm Re\,}}
\def\uphar{{\upharpoonright\,}}
\DeclareMathOperator{\IM}{Im}
\DeclareMathOperator{\Ext}{Ext}
\newcommand{\eval}[2][\right]{\relax
  \ifx#1\right\relax \left.\fi#2#1\rvert}
\begin{document}

\title[Original  Weyl-Titchmarsh functions]{The original  Weyl-Titchmarsh functions and sectorial  Schr\"odinger  L-systems}

%    Information for first author
\author{S. Belyi}
\address{Department of Mathematics\\ Troy University\\
Troy, AL 36082, USA\\
%URL: {\sf http://spectrum.troy.edu/$\sim$belyi/}
}
\curraddr{}
\email{sbelyi@troy.edu}
%\thanks{Thank you.}

%\author[Makarov]{K. A. Makarov}
%\address{Department of Mathematics\\
% University of Missouri\\
%  Columbia, MO 63211, USA}
%\email{makarovk@missouri.edu}

%    Information for second author
\author{E. Tsekanovski\u i}
\address{Department of Mathematics\\ Niagara University\\
Lewiston, NY 14109\\ USA}
\email{tsekanov@niagara.edu}
%\thanks{Thank you very much.}

%    General info
\subjclass{Primary 47A10; Secondary 47N50, 81Q10}
\date{DD/MM/2004}

\dedicatory{Dedicated with great pleasure to Seppo Hassi on the occasion of his 60-th birthday}

\keywords{L-system, Schr\"odinger  operator, transfer function, impedance function,  Herglotz-Nevanlinna function, Stieltjes function, Weyl-Titch\-marsh function}

\begin{abstract}
In this paper we study the L-system realizations generated by the original Weyl-Titchmarsh functions  $m_\alpha(z)$ in the case when the minimal symmetric Schr\"o\-dinger  operator in $L_2[\ell,+\infty)$ is non-negative.
We realize functions $(-m_\alpha(z))$ as impe\-dance functions of  Schr\"odinger  L-systems and  derive  necessary and sufficient conditions for  $(-m_\alpha(z))$ to fall into sectorial classes  $S^{\beta_1,\beta_2}$ of Stieltjes functions. Moreover, it is shown that the knowledge of the value  $m_\infty(-0)$ and parameter $\alpha$ allows us to describe the geometric structure of the L-system that realizes  $(-m_\alpha(z))$. Conditions when the main and state space operators  of the L-system  realizing $(-m_\alpha(z))$ have the same or not angle of sectoriality are presented in terms of the parameter $\alpha$. Example that illustrates the obtained results is presented in the end of the paper.

 \end{abstract}

\maketitle

\tableofcontents

\section{Introduction}\label{s1}

%%%%%%%%%%%%%%%%%%%
This paper is a part of an ongoing project studying the realizations of  the {original Weyl-Titchmarsh function} $m_\infty(z)$ and its {linear-fractional transformation} $m_\alpha(z)$ associated with a Schr\"odinger operator in $L_2[\ell,+\infty)$.  In this  project the Herglotz-Nevanlin\-na functions $(-m_\infty(z))$ and $(1/m_\infty(z))$ as well as $(-m_\alpha(z))$ and $(1/m_\alpha(z))$   are being realized as impedance functions of L-systems with a dissipative Schr\"o\-dinger main operator $T_h$, ($\IM h>0$). For the sake of brevity we will refer to these L-systems as \textit{Schr\"odinger L-systems} for the rest of the manuscript.   The formal definition, exposition and discussions of general and Schr\"odinger L-systems are presented in Sections \ref{s2} and  \ref{s4}. We capitalize on the fact that all Schr\"odinger L-systems $\Theta_{\mu,h}$ form a two-parametric family whose members are uniquely defined by a real-valued parameter $\mu$ and a complex boundary value $h$  of the main dissipative operator.

The focus of this paper is set on the case when  the realizing Schr\"odinger L-systems are based on non-negative symmetric Schr\"odinger  operator with $(1,1)$ deficiency indices and have accretive state-space operator. It is known  (see \cite{ABT}) that in this case the impedance functions of such L-systems are Stieltjes. Here we study the situation when the realizing Schr\"odinger L-systems are also sectorial and the  Weyl-Titchmarsh functions $(-m_\alpha(z))$ fall into \textit{sectorial} classes $S^\beta$ and $S^{\beta_1,\beta_2}$ of  Stieltjes functions that are discussed in details in Section \ref{s3}.
Section \ref{s5} provides us with the general realization results (obtained in \cite{BT18}) for the functions $(-m_\infty(z))$,  $(1/m_\infty(z))$, and $(-m_\alpha(z))$. It is shown that $(-m_\infty(z))$,  $(1/m_\infty(z))$, and $(-m_\alpha(z))$ can be realized as the impedance function of  Schr\"odinger L-systems  $\Theta_{0,i}$, $\Theta_{\infty,i}$, and $\Theta_{\tan\alpha, i}$, respectively.

   The main  results of the paper are contained in Section \ref{s7}. Here we apply the realization theorems from Section  \ref{s5} to  Schr\"odinger L-systems that are based on non-negative symmetric Schr\"odinger  operator to obtain additional properties. %In particular, in Theorem \ref{t-14} we describe the cases when the realizing Schr\"odinger L-systems are accretive.
   Utilizing the results presented in Section \ref{s4}, we derive some new features of  Schr\"odinger L-systems $\Theta_{\tan\alpha, i}$ whose impedance functions fall into  particular sectorial classes $S^{\beta_1,\beta_2}$ with $\beta_1$ and $\beta_2$ explicitly described. The results are given in terms of the  parameter $\alpha$ that appears in the definition of the function $m_\alpha(z)$. Moreover, the knowledge of the limit value  $m_\infty(-0)$ and the value of $\alpha$ allows us to find the angle of sectoriality of the main and state-space operators of the realizing L-system.  This, in turn,  leads to connections to Kato's problem about sectorial extension of sectorial forms.

 The paper is concluded with an example that illustrates   main results and concepts. The present work is a further development of the theory of open physical systems conceived by M.~Liv\u sic in \cite{Lv2}.

\section{Preliminaries}\label{s2}

For a pair of Hilbert spaces $\calH_1$, $\calH_2$ we denote by $[\calH_1,\calH_2]$ the set of all bounded linear operators from
$\calH_1$ to $\calH_2$. Let $\dA$ be a closed, densely defined, symmetric operator in a Hilbert space $\calH$ with inner product
$(f,g),f,g\in\calH$. Any non-symmetric operator $T$ in $\cH$ such that $\dA\subset T\subset\dA^*$ is called a \textit{quasi-self-adjoint extension} of $\dA$.

 Consider the rigged Hilbert space (see \cite{Ber}, \cite{ABT})
%\label{107}
$\calH_+\subset\calH\subset\calH_- ,$ where $\calH_+ =\dom(\dA^*)$ and
%\label{108}
\begin{equation}\label{108}
(f,g)_+ =(f,g)+(\dA^* f, \dA^*g),\;\;f,g \in \dom(A^*).
\end{equation}
Let $\calR$ be the \textit{\textrm{Riesz-Berezansky   operator}} $\calR$ (see \cite{Ber}, \cite{ABT}) which maps $\mathcal H_-$ onto $\mathcal H_+$ such
 that   $(f,g)=(f,\calR g)_+$ ($\forall f\in\calH_+$, $g\in\calH_-$) and
 $\|\calR g\|_+=\| g\|_-$.
 Note that
identifying the space conjugate to $\calH_\pm$ with $\calH_\mp$, we
get that if $\bA\in[\calH_+,\calH_-]$, then
$\bA^*\in[\calH_+,\calH_-].$
% \begin{definition}
An operator $\bA\in[\calH_+,\calH_-]$ is called a \textit{self-adjoint
bi-extension} of a symmetric operator $\dA$ if $\bA=\bA^*$ and $\bA
\supset \dA$.
%\end{definition}
Let $\bA$ be a self-adjoint
bi-extension of $\dA$ and let the operator $\hat A$ in $\cH$ be defined as follows:
$$
\dom(\hat A)=\{f\in\cH_+:\bA f\in\cH\}, \quad \hat A=\bA\uphar\dom(\hat A).
$$
The operator $\hat A$ is called a \textit{quasi-kernel} of a self-adjoint bi-extension $\bA$ (see \cite{TSh1}, \cite[Section 2.1]{ABT}).
%According to the von Neumann Theorem (see \cite[Theorem 1.3.1]{ABT}) the domain of $\wh A$, a self-adjoint extension of $\dA$,  can be expressed as
%\begin{equation}\label{DOMHAT}
%\dom(\hat A)=\dom(\dA)\oplus(I+U)\sN_{i},
%\end{equation}
%where von Neumann's parameter $U$ is a $(\cdot)$ (and $(+)$)-isometric operator from $\sN_i$ into $\sN_{-i}$  and $$\sN_{\pm i}=\Ker (\dA^*\mp i I)$$ are the deficiency subspaces of $\dA$.
 A self-adjoint bi-extension $\bA$ of a symmetric operator $\dA$ is called \textit{t-self-adjoint} (see \cite[Definition 4.3.1]{ABT}) if its quasi-kernel $\hat A$ is self-adjoint operator in $\calH$.
An operator $\bA\in[\calH_+,\calH_-]$  is called a \textit{quasi-self-adjoint bi-extension} of an operator $T$ if $\bA\supset T\supset \dA$ and $\bA^*\supset T^*\supset\dA.$  We will be mostly interested in the following type of quasi-self-adjoint bi-extensions.
%\begin{definition}[\cite{ABT}]\label{star_ext}
Let $T$ be a quasi-self-adjoint extension of $\dA$ with nonempty resolvent set $\rho(T)$. A quasi-self-adjoint bi-extension $\bA$ of an operator $T$ is called (see \cite[Definition 3.3.5]{ABT}) a \textit{($*$)-extension } of $T$ if $\RE\bA$ is a
t-self-adjoint bi-extension of $\dA$.
%\end{definition}
In what follows we assume that $\dA$ has deficiency indices $(1,1)$. In this case it is known \cite{ABT} that every  quasi-self-adjoint extension $T$ of $\dA$  admits $(*)$-extensions.
%equal finite deficiency indices and will say that a quasi-self-adjoint extension $T$ of $\dA$ belongs to the \textit{class $\Lambda(\dA)$} if $\rho(T)\ne\emptyset$, $\dom(\dA)=\dom(T)\cap\dom(T^*)$, and hence  $T$ admits $(*)$-extensions.
The description of all $(*)$-extensions via Riesz-Berezansky   operator $\calR$ can be found in \cite[Section 4.3]{ABT}.

Recall that a linear operator $T$ in a Hilbert space $\calH$ is called \textbf{accretive} \cite{Ka} if $\RE(Tf,f)\ge 0$ for all $f\in \dom(T)$.  We call an accretive operator $T$
\textbf{$\beta$-sectorial} \cite{Ka} if there exists a value of $\beta\in(0,\pi/2)$ such that
\begin{equation}\label{e8-29}
    (\cot\beta) |\IM(Tf,f)|\le\,\RE(Tf,f),\qquad f\in\dom(T).
\end{equation}
We say that the angle of sectoriality $\beta$ is \textbf{exact} for a $\beta$-sectorial
operator $T$ if $$\tan\beta=\sup_{f\in\dom(T)}\frac{|\IM(Tf,f)|}{\RE(Tf,f)}.$$
An accretive operator is called \textbf{extremal accretive} if it is not $\beta$-sectorial for any $\beta\in(0,\pi/2)$.
 A $(*)$-extension $\bA$ of $T$ is called \textbf{accretive} if $\RE(\bA f,f)\ge 0$ for all $f\in\cH_+$. This is
equivalent to that the real part $\RE\bA=(\bA+\bA^*)/2$ is a nonnegative t-self-adjoint bi-extension of $\dA$.
%A ($*$)-extensions $\bA$ of an operator $T$  is called \textbf{accumulative} if
%\begin{equation}\label{e7-3-3}
%(\RE\bA f,f)\le (\dA^\ast f,f)+(f,\dA^\ast f),\quad f\in\calH_+.
%\end{equation}

The following definition is a ``lite" version of the definition of L-system given for a scattering L-system with
 one-dimensional input-output space. It is tailored for the case when the symmetric operator of an L-system has deficiency indices $(1,1)$. The general definition of an L-system can be found in \cite[Definition 6.3.4]{ABT} (see also \cite{BHST1} for a non-canonical version).
\begin{definition}
 An array
%\label{141}
\begin{equation}\label{e6-3-2}
\Theta= \begin{pmatrix} \bA&K&\ 1\cr \calH_+ \subset \calH \subset
\calH_-& &\dC\cr \end{pmatrix}%,\quad (\dim E<\infty)
\end{equation}
 is called an \textbf{{L-system}}   if:
\begin{enumerate}
\item[(1)] {$T$ is a dissipative ($\IM(Tf,f)\ge0$, $f\in\dom(T)$) quasi-self-adjoint extension of a symmetric operator $\dA$ with deficiency indices $(1,1)$};
\item[(2)] {$\mathbb  A$ is a   ($\ast $)-extension of  $T$};
%\item[(2)] {$J=J^\ast =J^{-1}\in [E,E],\quad \dim E < \infty $};
\item[(3)] $\IM\bA= KK^*$, where $K\in [\dC,\calH_-]$ and $K^*\in [\calH_+,\dC]$.%, and $\ran(K)=\ran (\IM\bA).$
\end{enumerate}
\end{definition}
%In the definition above   $\varphi_- \in E$ stands for an input vector, $\varphi_+ \in E$ is an output vector, and $x$ is a state space vector in $\calH$.
  Operators $T$ and $\bA$ are called a \textit{main and state-space operators respectively} of the system $\Theta$, and $K$ is  a \textit{channel operator}.
It is easy to see that the operator $\bA$ of the system  \eqref{e6-3-2}  is such that $\IM\bA=(\cdot,\chi)\chi$, $\chi\in\calH_-$ and pick $K c=c\cdot\chi$, $c\in\dC$ (see \cite{ABT}).
  A system $\Theta$ in \eqref{e6-3-2} is called \textit{minimal} if the operator $\dA$ is a prime operator in $\calH$, i.e., there exists no non-trivial reducing invariant subspace of $\calH$ on which it induces a self-adjoint operator. Minimal L-systems of the form \eqref{e6-3-2} with  one-dimensional input-output space were also considered in \cite{BMkT}.

We  associate with an L-system $\Theta$ the  function
\begin{equation}\label{e6-3-3}
W_\Theta (z)=I-2iK^\ast (\mathbb  A-zI)^{-1}K,\quad z\in \rho (T),
\end{equation}
which is called the \textbf{transfer  function} of the L-system $\Theta$. We also consider the  function
\begin{equation}\label{e6-3-5}
V_\Theta (z) = K^\ast (\RE\bA - zI)^{-1} K,
\end{equation}
that is called the
\textbf{impedance function} of an L-system $ \Theta $ of the form (\ref{e6-3-2}).  The transfer function $W_\Theta (z)$ of the L-system $\Theta $ and function $V_\Theta (z)$ of the form (\ref{e6-3-5}) are connected by the following relations valid for $\IM z\ne0$, $z\in\rho(T)$,
\begin{equation*}\label{e6-3-6}
\begin{aligned}
V_\Theta (z) &= i [W_\Theta (z) + I]^{-1} [W_\Theta (z) - I],\\
W_\Theta(z)&=(I+iV_\Theta(z))^{-1}(I-iV_\Theta(z)).
\end{aligned}
\end{equation*}
An L-system $\Theta $ of the form \eqref{e6-3-2} is called an \textbf{accretive L-system} (\cite{BT10}, \cite{DoTs}) if its state-space operator  operator $\bA$ is accretive, that is $\RE(\bA f,f)\ge 0$ for all $f\in \calH_+$.
% and \textbf{accumulative} (\cite{BT11}) if its state-space operator $\bA$ is accumulative, i.e., satisfies \eqref{e7-3-3}. It is easy to see that if an L-system is accumulative, then \eqref{e7-3-3} implies that the operator $\dA$ of the system is non-negative and both operators $T$ and $T^*$ are accretive. We also associate another operator $\ti\bA$ to an accumulative L-system $\Theta$. It is given by
%\begin{equation}\label{e-14}
%    \ti\bA=2\,\RE\dA^*-\bA,
%\end{equation}
%where $\dA^*$ is in $[\calH_+,\calH_-]$. Obviously, $\RE\dA^*\in[\calH_+,\calH_-]$ and $\ti\bA\in[\calH_+,\calH_-]$. Clearly, $\ti\bA$ is a bi-extension of $\dA$ and is accretive if and only if $\bA$ is accumulative. It is also not hard to see that even though $\ti\bA$ is not a ($*$)-extensions  of the operator $T$ but the form $(\ti\bA f,f)$, $f\in\calH_+$ extends the form $(f,T f)$, $f\in\dom(T)$.
 An accretive  L-system  is called  \textbf{sectorial} if the operator  $\bA$ is sectorial, i.e., satisfies \eqref{e8-29} for some $\beta\in(0,\pi/2)$ and all $f\in\cH_+$. %Similarly, an accumulative L-system  is   \textbf{sectorial} if its  operator $\ti\bA$ of the form \eqref{e-14} is sectorial.

\section{Sectorial classes  and  and their realizations}\label{s3}

A scalar function $V(z)$ is called the Herglotz-Nevanlinna function if it is holomorphic on ${\dC \setminus \dR}$, symmetric with respect to the real axis, i.e., $V(z)^*=V(\bar{z})$, $z\in {\dC \setminus \dR}$, and if it satisfies the positivity condition $\IM V(z)\geq 0$,  $z\in \dC_+$.
 The class of all Herglotz-Nevanlinna functions, that can be realized as impedance functions of L-systems, and connections with Weyl-Titchmarsh functions can be found in \cite{ABT}, \cite{BMkT},  \cite{DMTs},  \cite{GT} and references therein.
%Selected class of Herglotz-Nevanlinna functions can be realized as impedance functions of L-systems $\Theta$ of the form \eqref{e6-3-2} whose  definition will be given in Section \ref{s2}.
The following definition  can be found in \cite{KK74}.
%\begin{definition}\label{d8-4-1}
A scalar Herglotz-Nevanlinna function $V(z)$ is a \textit{Stieltjes function} if it is holomorphic in $\Ext[0,+\infty)$ and
\begin{equation}\label{e4-0}
\frac{\IM[zV(z)]}{\IM z}\ge0.
\end{equation}
It is known \cite{KK74} that a Stieltjes function  $V(z)$  admits the following integral representation
\begin{equation}\label{e8-94}
V(z) =\gamma+\int\limits_0^\infty\frac {dG(t)}{t-z},
\end{equation}
where $\gamma\ge0$ and $G(t)$ is a non-decreasing on $[0,+\infty)$  function such that
$\int^\infty_0\frac{dG(t)}{1+t}<\infty.$  We are going to focus on the \textbf{class $S_0(R)$}  (see \cite{BT10}, \cite{DoTs}, \cite{ABT}) of scalar Stieltjes functions such that
 the measure $G(t)$ in  representation \eqref{e8-94} is of unbounded variation. It was shown in \cite{ABT} (see also \cite{BT10}) that such a function $V(z)$ can be realized as the impedance function of an accretive  L-system $\Theta$ of the form \eqref{e6-3-2} with a densely defined symmetric operator if and only if it belongs to the class $S_0(R)$.

Now we are going to consider sectorial subclasses of scalar Stieltjes  functions introduced in \cite{AlTs1}. Let $\beta\in(0,\frac{\pi}{2})$. \textbf{Sectorial subclasses $S^{\beta}$ } of Stieltjes functions are defined as follows:  a scalar Stieltjes function $V(z)$ belongs to $S^{\beta}$ if
\begin{equation}\label{e9-180}
K_{\beta}= \sum_{k,l=1}^n\left[\frac{z_k V(z_k)-\bar z_l V(\bar z_l)}{z_k-\bar z_l}-{{(\cot\beta)}~}V(\bar z_l)V(z_k)\right]h_k\bar h_l\ge0,
\end{equation}
for an arbitrary sequences of complex numbers $\{z_k\}$, ($\IM z_k>0$) and $\{h_k\}$, ($k=1,...,n$).
 For $0<\beta_1< \beta_2 <\frac{\pi}{2}$, we have
\begin{equation*}
S^{ \beta_1}\subset S^{ \beta_2}\subset{S},
\end{equation*}
where $S$ denotes the class of all Stieltjes functions (which corresponds to the case $\beta=\frac{\pi}{2}$). Let $\Theta$ be a minimal L-system of the form \eqref{e6-3-2} with a densely defined non-negative symmetric operator $\dA$. Then (see \cite{ABT}) the impedance function $V_\Theta(z)$ defined by \eqref{e6-3-5} belongs to the class $S^{\beta}$ if and only if the operator $\bA$ of the L-system $\Theta$ is $\beta$-sectorial.

Let $0\le \beta_1<\frac{\pi}{2}$, $0<\beta_2\le \frac{\pi}{2}$, and $\beta_1\le\beta_2$.
We say that a scalar Stieltjes function $V(z)$ belongs to the \textbf{class $S^{\,\beta_1,\beta_2}$} if
\begin{equation}\label{e9-156}
    \tan\beta_1=\lim_{x\to-\infty}V(x),\qquad \tan\beta_2=\lim_{x\to-0}V(x).
\end{equation}
The following connection between the classes $S^{\,\beta}$ and $S^{\,\beta_1,\beta_2}$ can be found in \cite{ABT}.
Let $\Theta$ be an L-system of the form \eqref{e6-3-2} with a densely defined non-negative symmetric operator $\dA$ with deficiency numbers $(1,1).$ Let also $\bA$ be an $\beta$-sectorial $(*)$-extension of $T$. Then the impedance function $V_\Theta(z)$ defined by \eqref{e6-3-5} belongs to the class $S^{\beta_1,\beta_2}$, $\tan\beta_2\le\tan\beta$. Moreover,  the main operator $T$ is $(\beta_2-\beta_1)$-sectorial with the exact  angle of sectoriality $(\beta_2-\beta_1)$.
In the case when  $\beta$  is the exact angle of sectoriality of the operator $T$ we have that $V_\Theta(z)\in S^{0,\beta}$ (see \cite{ABT}).
%\begin{proof}
%According to Theorem \ref{t9-32} the exact angle of sectoriality is given by $\beta_2-\beta_1$, where
%\begin{equation*}%\label{e9-174}
%\tan\beta_1=\lim_{x\to-\infty}V_\Theta(x),\quad
%\tan\beta_2=\lim_{x\to-0}V_\Theta(x).
%\end{equation*}
% It was also shown that $\tan\beta\ge\tan\beta_2$. On the other hand, since in the statement of the current corollary $\beta$ be the exact angle of sectoriality of $T$, then $\beta=\beta_2-\beta_1$ and hence
%$\tan(\beta_2-\beta_1)\ge\tan \beta_2.$
%Therefore, $\beta_1=0$.
%\end{proof}
%\begin{remark}\label{r9-3}
It also follows that under this set of assumptions, the impedance function $V_\Theta(z)$ is such that $\gamma=0$ in representation \eqref{e8-94}.
%\begin{equation*}%\label{e9-175}
%    V_\Theta(z)=\int_0^\infty\frac{dG(t)}{t-z}.
%\end{equation*}
%\end{remark}
%For the remainder of this paper we will need to rely on the following theorem whose proof can be found in \cite{ABT}.
%\begin{theorem}\label{t9-33}

Now let $\Theta$ be an   L-system of the form \eqref{e6-3-2}, where $\bA$ is a $(*)$-extension of $T$ and $\dA$ is a closed densely defined non-negative symmetric operator  with deficiency numbers $(1,1).$ It was proved in \cite{ABT} that if  the impedance function $V_\Theta(z)$ belongs to the class  $S^{\beta_1,\beta_2}$ and $\beta_2\ne\pi/2$, then $\bA$ is $\beta$-sectorial, where
\begin{equation}\label{e9-176}
    \tan\beta=\tan\beta_2+2\sqrt{\tan\beta_1(\tan\beta_2-\tan\beta_1)}.
\end{equation}
%\end{theorem}
 %The next statement was also shown in \cite{ABT} and gives an explicit description of all the functions from the class $S^{\,\beta_1,\beta_2}$ that are realizable as impedance functions of such L-systems that the exact angles of sectoriality of $T$ and $\bA$ coincide.
 Under the above set of conditions on L-system $\Theta$,  it is shown in \cite{ABT} that  $\bA$ is $\beta$-sectorial $(*)$-extension of an $\beta$-sectorial operator $T$ with  the exact angle $\beta\in(0,\pi/2)$ if and only if $V_\Theta(z)\in S^{0,\beta}$.
 % is such that $\gamma=0$ in
 % $$V_\Theta(z)=\int_0^\infty\frac{dG(t)}{t-z}\in S^{0,\beta}.$$
 Moreover, the angle $\beta$ can be found via the formula
 \begin{equation}\label{e9-178-new}
\tan\beta=\int_0^\infty\frac{dG(t)}{t},
 \end{equation}
 where $G(t)$ is the measure from integral representation \eqref{e8-94} of $V_\Theta(z)$.
%%%%%%%%%%%%%%%%%%%%%%%%%%%%% ****************** %%%%%%%%%%%%%%%%%%%%

\section[L-systems with Schr\"odinger operator]{L-systems with Schr\"odinger operator and their impedance functions}\label{s4}

Let $\calH=L_2[\ell,+\infty)$, $\ell\ge0$, and $l(y)=-y^{\prime\prime}+q(x)y$, where $q$ is a real locally summable on  $[\ell,+\infty)$ function. Suppose that the symmetric operator
\begin{equation}
\label{128}
 \left\{ \begin{array}{l}
 \dA y=-y^{\prime\prime}+q(x)y \\
 y(\ell)=y^{\prime}(\ell)=0 \\
 \end{array} \right.
\end{equation}
has deficiency indices (1,1). Let $D^*$ be the set of functions locally absolutely continuous together with their first derivatives such that $l(y) \in L_2[\ell,+\infty)$. Consider $\calH_+=\dom(\dA^*)=D^*$ with the scalar product
$$(y,z)_+=\int_{\ell}^{\infty}\left(y(x)\overline{z(x)}+l(y)\overline{l(z)}
\right)dx,\;\; y,\;z \in D^*.$$ Let $\calH_+ \subset L_2[\ell,+\infty) \subset \calH_-$ be the corresponding triplet of Hilbert spaces. Consider the operators
\begin{equation}\label{131}
 \left\{ \begin{array}{l}
 T_hy=l(y)=-y^{\prime\prime}+q(x)y \\
 hy(\ell)-y^{\prime}(\ell)=0 \\
 \end{array} \right.
           ,\quad  \left\{ \begin{array}{l}
 T^*_hy=l(y)=-y^{\prime\prime}+q(x)y \\
 \overline{h}y(\ell)-y^{\prime}(\ell)=0 \\
 \end{array} \right.,
\end{equation}
where $\IM h>0$.
Let  $\dA$ be a symmetric operator  of the form \eqref{128} with deficiency indices (1,1), generated by the differential operation $l(y)=-y^{\prime\prime}+q(x)y$. Let also $\varphi_k(x,\lambda) (k=1,2)$ be the solutions of the following Cauchy problems:
$$\left\{ \begin{array}{l}
 l(\varphi_1)=\lambda \varphi_1 \\
 \varphi_1(\ell,\lambda)=0 \\
 \varphi'_1(\ell,\lambda)=1 \\
 \end{array} \right., \qquad
\left\{ \begin{array}{l}
 l(\varphi_2)=\lambda \varphi_2 \\
 \varphi_2(\ell,\lambda)=-1 \\
 \varphi'_2(\ell,\lambda)=0 \\
 \end{array} \right.. $$
It is well known \cite{Na68}, \cite{Levitan} that there exists a function $m_\infty(\lambda)$  introduced by H.~Weyl \cite{W}, \cite{W10} for which
$$\varphi(x,\lambda)=\varphi_2(x,\lambda)+m_\infty(\lambda)
\varphi_1(x,\lambda)$$ belongs to $L_2[\ell,+\infty)$. The function $m_\infty(\lambda)$ is not a Herglotz-Nevanlinna function but $(-m_\infty(\lambda))$ and $(1/m_\infty(\lambda))$ are.

%%%%%%%%%%%%%%%%%%%%%%%%%%%
Now we shall construct an L-system based on a non-self-adjoint Schr\"odin\-ger operator $T_h$ with $\IM h>0$.  It  was shown in \cite{ArTs0}, \cite{ABT} that  the set of all ($*$)-extensions of a non-self-adjoint Schr\"odinger operator $T_h$ of the form \eqref{131} in $L_2[\ell,+\infty)$ can be represented in the form
\begin{equation}\label{137}
\begin{split}
&\bA_{\mu, h}\, y=-y^{\prime\prime}+q(x)y-\frac {1}{\mu-h}\,[y^{\prime}(\ell)-
hy(\ell)]\,[\mu \delta (x-\ell)+\delta^{\prime}(x-\ell)], \\
&\bA^*_{\mu, h}\, y=-y^{\prime\prime}+q(x)y-\frac {1}{\mu-\overline h}\,
[y^{\prime}(\ell)-\overline hy(\ell)]\,[\mu \delta
(x-\ell)+\delta^{\prime}(x-\ell)].
\end{split}
\end{equation}
Moreover, the formulas \eqref{137} establish a one-to-one correspondence between the set of all ($*$)-extensions of a Schr\"odinger operator $T_h$ of the form \eqref{131} and all real numbers $\mu \in [-\infty,+\infty]$. One can easily check that the ($*$)-extension $\bA$ in \eqref{137} of the non-self-adjoint dissipative Schr\"odinger operator $T_h$, ($\IM h>0$) of the form \eqref{131} satisfies the condition
\begin{equation*}\label{145}
\IM\bA_{\mu, h}=\frac{\bA_{\mu, h} - \bA^*_{\mu, h}}{2i}=(.,g_{\mu, h})g_{\mu, h},
\end{equation*}
where
\begin{equation}\label{146}
g_{\mu, h}=\frac{(\IM h)^{\frac{1}{2}}}{|\mu - h|}\,[\mu\delta(x-\ell)+\delta^{\prime}(x-\ell)]
\end{equation}
and $\delta(x-\ell), \delta^{\prime}(x-\ell)$ are the delta-function and
its derivative at the point $\ell$, respectively. Furthermore,
\begin{equation*}\label{147}
(y,g_{\mu, h})=\frac{(\IM h)^{\frac{1}{2}}}{|\mu - h|}\ [\mu y(\ell)
-y^{\prime}(\ell)],
\end{equation*}
where $y\in \calH_+$, $g\in \calH_-$, and $\calH_+ \subset L_2[\ell,+\infty) \subset \calH_-$ is the triplet of Hilbert spaces discussed above.

It was also shown in \cite{ABT} that the quasi-kernel $\hat A_\xi$ of $\RE\bA_{\mu, h}$ is given by
\begin{equation}\label{e-31}
  \left\{ \begin{array}{l}
 \hat A_\xi y=-y^{\prime\prime}+q(x)y \\
 y^{\prime}(\ell)=\xi y(\ell) \\
 \end{array} \right.,\quad \textrm{where} \quad  \xi=\frac{\mu\RE h-|h|^2}{\mu-\RE h}.
\end{equation}
Let $E=\dC$, $K_{\mu, h}{c}=cg_{\mu, h}, \;(c\in \dC)$. It is clear that
\begin{equation}\label{148}
K^*_{\mu, h} y=(y,g_{\mu, h}),\quad  y\in \calH_+,
\end{equation}
and $\IM\bA_{\mu, h}=K_{\mu, h}K^*_{\mu, h}.$ Therefore, the array
\begin{equation}\label{149}
\Theta_{\mu, h}= \begin{pmatrix} \bA_{\mu, h}&K_{\mu, h}&1\cr \calH_+ \subset
L_2[\ell,+\infty) \subset \calH_-& &\dC\cr \end{pmatrix},
\end{equation}
is an L-system  with the main operator $T_h$, ($\IM h>0$) of the form \eqref{131},  the state-space operator $\bA_{\mu, h}$ of the form \eqref{137}, and  with the channel operator $K_{\mu, h}$ of the form \eqref{148}.
It was established in \cite{ArTs0}, \cite{ABT} that the transfer and impedance functions of $\Theta_{\mu, h}$ are
\begin{equation}\label{150}
W_{\Theta_{\mu, h}}(z)= \frac{\mu -h}{\mu - \overline h}\,\,
\frac{m_\infty(z)+ \overline h}{m_\infty(z)+h},
\end{equation}
and
\begin{equation}\label{1501}
V_{\Theta_{\mu, h}}(z)=\frac{\left(m_\infty(z)+\mu\right)\IM h}{\left(\mu-\RE h\right)m_\infty(z)+\mu\RE h-|h|^2}.
\end{equation}
It was shown in \cite[Section 10.2]{ABT} that if the parameters $\mu$ and $\xi$ are related via \eqref{e-31}, then the two L-systems $\Theta_{\mu, h}$ and $\Theta_{\xi, h}$ of the form \eqref{149} have the following property
\begin{equation}\label{e-35-mu-xi}
    W_{\Theta_{\mu, h}}(z)=-W_{\Theta_{\xi, h}}(z),\; V_{\Theta_{\mu, h}}(z)=-\frac{1}{V_{\Theta_{\xi, h}}(z)},\; \textrm{where} \quad \xi=\frac{\mu\RE h-|h|^2}{\mu-\RE h}.
\end{equation}

\section{Realizations of $-m_\infty(z)$, $1/m_\infty(z)$ and $m_\alpha(z)$.}\label{s5}

It is known \cite{Levitan}, \cite{Na68} that the original  Weyl-Titchmarsh function $m_\infty(z)$ has a property that $(-m_\infty(z))$ is a Herglotz-Nevanlinna function. The question whether $(-m_\infty(z))$ can be realized as the impedance function of a Schr\"odinger L-system is answered in the following theorem that was proved in \cite{BT18}.

\begin{theorem}[\cite{BT18}]\label{t-6}%
Let $\dA$ be a  symmetric Schr\"odinger operator of the form \eqref{128} with deficiency indices $(1, 1)$ and locally summable potential in $\calH=L^2[\ell, \infty).$ If $m_\infty(z)$ is the  Weyl-Titchmarsh function of $\dA$, then the Herglotz-Nevanlinna function $(-m_\infty(z))$ can be realized as the impedance function of  a Schr\"odinger L-system $\Theta_{\mu, h}$ of the form \eqref{149} with $\mu=0$ and $h=i$.
%\begin{equation}\label{e-35-h-mu}
%    \mu=0\quad \textrm{and}\quad h=i.
%\end{equation}

Conversely, let $\Theta_{\mu, h}$ be  a Schr\"odinger L-system of the form \eqref{149} with the symmetric operator $\dA$ such that $V_{\Theta_{\mu, h}}(z)=-m_\infty(z),$ for all $z\in\dC_\pm$ and $\mu\in\mathbb R\cup\{\infty\}$. Then the parameters $\mu$ and $h$ defining $\Theta_{\mu, h}$ are such that $\mu=0$ and $h=i$.
\end{theorem}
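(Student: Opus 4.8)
The plan is to handle the two directions separately: the forward (realization) statement reduces to a one-line substitution into the impedance formula \eqref{1501}, while the converse (uniqueness) statement is a polynomial-identity argument applied to the same formula.

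For the forward direction I would substitute $\mu=0$ and $h=i$ (so that $\IM h=1$, $\RE h=0$, $|h|^2=1$) directly into \eqref{1501}. The numerator becomes $(m_\infty(z)+0)\cdot 1=m_\infty(z)$, the denominator collapses to $(0-0)m_\infty(z)+0-1=-1$, and therefore $V_{\Theta_{0,i}}(z)=-m_\infty(z)$. Combined with the construction in Section \ref{s4} showing that $\Theta_{0,i}$ is a bona fide Schr\"odinger L-system of the form \eqref{149}, this establishes realizability.

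For the converse, write $m=m_\infty(z)$ and impose $V_{\Theta_{\mu,h}}(z)=-m$ in \eqref{1501}. First I would dispose of the value $\mu=\infty$: letting $\mu\to\infty$ in \eqref{1501} yields $V_{\Theta_{\infty,h}}(z)=\IM h/(m+\RE h)$, and equating this to $-m$ forces $m^2+(\RE h)\,m+\IM h\equiv 0$, whose leading coefficient equals $1$ and cannot vanish; hence $\mu$ must be finite. For finite $\mu$, clearing the denominator in $(m+\mu)\IM h=-m\bigl[(\mu-\RE h)m+\mu\RE h-|h|^2\bigr]$ and collecting powers of $m$ gives
\begin{equation*}
(\mu-\RE h)\,m^2+\bigl[\IM h+\mu\RE h-|h|^2\bigr]\,m+\mu\IM h=0 .
\end{equation*}
Since $m_\infty(z)$ is a non-constant holomorphic function, its range is infinite, so this quadratic in $m$ must vanish identically. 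The $m^2$-coefficient gives $\mu=\RE h$; the constant term $\mu\IM h=0$ together with $\IM h>0$ forces $\mu=0$, hence $\RE h=0$; finally the $m$-coefficient reduces to $\IM h-|h|^2=\IM h-(\IM h)^2=0$, and $\IM h>0$ yields $\IM h=1$. Thus $\mu=0$ and $h=i$.

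The only genuinely delicate point is the passage from ``the quadratic vanishes on the range of $m_\infty$'' to ``the quadratic vanishes identically,'' which rests on $m_\infty(z)$ being non-constant; this is standard for the Weyl--Titchmarsh function of a symmetric operator with deficiency indices $(1,1)$, so I would invoke it directly rather than reprove it. The remaining bookkeeping in clearing denominators and separating the $\mu=\infty$ case is routine.
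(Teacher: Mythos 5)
Your proof is correct and follows the natural (and, in \cite{BT18}, the actual) route: direct substitution of $\mu=0$, $h=i$ into \eqref{1501} for realizability, and, for uniqueness, clearing denominators and forcing every coefficient of the resulting quadratic in $m_\infty(z)$ to vanish, using that the non-constant function $m_\infty$ takes infinitely many values. The paper at hand states Theorem \ref{t-6} only as a citation from \cite{BT18}, so there is no in-paper proof to diverge from; your treatment of the $\mu=\infty$ case via the limit of \eqref{1501} is consistent with how the L-system $\Theta_{\infty,h}$ (with state-space operator \eqref{153}) is handled in this framework.
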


A similar result for the function $1/m_\infty(z)$ was also proved in \cite{BT18}.
\begin{theorem}[\cite{BT18}]\label{t-7}%
Let $\dA$ be a  symmetric Schr\"odinger operator of the form \eqref{128} with deficiency indices $(1, 1)$ and locally summable potential in $\calH=L^2[\ell, \infty).$ If $m_\infty(z)$ is the  Weyl-Titchmarsh function of $\dA$, then the Herglotz-Nevanlinna function $(1/m_\infty(z))$ can be realized as the impedance function of  a Schr\"odinger L-system $\Theta_{\mu, h}$ of the form \eqref{149} with $\mu=\infty$ and $h=i$.
%\begin{equation}\label{e-42-h-mu}
%    \mu=\infty\quad \textrm{and}\quad h=i.
%\end{equation}

Conversely, let $\Theta_{\mu, h}$ be  a Schr\"odinger L-system of the form \eqref{149} with the symmetric operator $\dA$ such that $V_{\Theta_{\mu, h}}(z)=\frac{1}{m_\infty(z)},$ for all $z\in\dC_\pm$ and $\mu\in\mathbb R\cup\{\infty\}$. Then the parameters $\mu$ and $h$ defining $\Theta_{\mu, h}$ are such that $\mu=\infty$ and $h=i$.
\end{theorem}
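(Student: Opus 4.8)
The plan is to run the same argument used for Theorem \ref{t-6}, but transported through the duality \eqref{e-35-mu-xi}, and to back it up by a direct computation with the explicit impedance formula \eqref{1501}. The only genuine care needed is that \eqref{1501} degenerates at $\mu=\infty$, so that endpoint must be reached by a limit rather than by substitution. Throughout I write $m=m_\infty(z)$ for brevity and use that $(-m_\infty)$ is a non-constant Herglotz-Nevanlinna function, so $m$ sweeps out an open subset of $\dC$ as $z$ varies.

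For the forward (realization) direction I would first record the correct $\mu=\infty$ specialization of \eqref{1501}: dividing numerator and denominator by $\mu$ and letting $\mu\to\infty$ gives
$$V_{\Theta_{\infty,h}}(z)=\frac{\IM h}{m_\infty(z)+\RE h}.$$
Setting $h=i$, so that $\RE h=0$ and $\IM h=1$, immediately yields $V_{\Theta_{\infty,i}}(z)=1/m_\infty(z)$, which is the claim. Equivalently, and more in keeping with the surrounding material, I would observe that for $h=i$ the parameter relation $\xi=(\mu\RE h-|h|^2)/(\mu-\RE h)$ in \eqref{e-35-mu-xi} reduces to $\xi=-1/\mu$, so $\mu=\infty$ corresponds to $\xi=0$; then \eqref{e-35-mu-xi} combined with Theorem \ref{t-6} gives $V_{\Theta_{\infty,i}}(z)=-1/V_{\Theta_{0,i}}(z)=-1/(-m_\infty(z))=1/m_\infty(z)$.

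For the converse, assume $V_{\Theta_{\mu,h}}(z)=1/m_\infty(z)$ for all $z\in\dC_\pm$, and treat finite $\mu$ and $\mu=\infty$ separately. If $\mu$ is finite, clearing denominators in \eqref{1501} produces
$$(\IM h)\,m^2+\mu(\IM h)\,m=(\mu-\RE h)\,m+\mu\RE h-|h|^2,$$
which, since $m$ ranges over an open set, is a polynomial identity in $m$; comparing the coefficient of $m^2$ forces $\IM h=0$, contradicting $\IM h>0$. Hence $\mu=\infty$, and inserting the limiting formula above into $V=1/m_\infty$ gives the identity $(\IM h)\,m=m+\RE h$, whose coefficient comparison yields $\IM h=1$ and $\RE h=0$, i.e.\ $h=i$. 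Alternatively I would invoke the duality directly: $V_{\Theta_{\mu,h}}=1/m_\infty$ forces $V_{\Theta_{\xi,h}}=-m_\infty$ through \eqref{e-35-mu-xi}, so the converse half of Theorem \ref{t-6} delivers $\xi=0$ and $h=i$, and then $\xi=-1/\mu=0$ returns $\mu=\infty$.

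The main obstacle is entirely technical rather than conceptual. One must not substitute $\mu=\infty$ naively into \eqref{1501}, where it is singular, but instead pass through the correct limiting expression $V_{\Theta_{\infty,h}}(z)=(\IM h)/(m_\infty(z)+\RE h)$; and one must justify converting an identity of analytic functions of $z$ into a polynomial identity in the single variable $m_\infty(z)$, which rests precisely on the non-constancy of $m_\infty$, equivalently on the open-mapping property of the nontrivial Herglotz-Nevanlinna function $(-m_\infty(z))$. Once these two points are in place, both implications follow by elementary coefficient matching.
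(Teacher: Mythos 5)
The paper never proves Theorem~\ref{t-7}; it is imported verbatim from \cite{BT18}, so there is no internal proof to compare against. Judged on its own, your argument is correct, and it is assembled entirely from facts this paper does state: formula \eqref{1501}, the duality \eqref{e-35-mu-xi}, and Theorem~\ref{t-6}. The one soft spot is the forward direction done by ``passing to the limit'' in \eqref{1501}: that presumes the impedance function of $\Theta_{\infty,h}$ is the $\mu\to\infty$ limit of the impedance functions of $\Theta_{\mu,h}$, i.e.\ continuity in the parameter, which you never justify --- the L-system at $\mu=\infty$ is a genuinely different object, whose state-space operator is \eqref{153} rather than \eqref{137}. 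Fortunately your parallel derivation through the duality closes this gap rigorously: with $h=i$ the relation in \eqref{e-35-mu-xi} becomes the involution $\xi=-1/\mu$ on $[-\infty,+\infty]$, pairing $\mu=\infty$ with $\xi=0$, so $V_{\Theta_{\infty,i}}(z)=-1/V_{\Theta_{0,i}}(z)=1/m_\infty(z)$ by Theorem~\ref{t-6}; run backwards, $V_{\Theta_{\mu,h}}=1/m_\infty$ forces $V_{\Theta_{\xi,h}}=-m_\infty$, whence $\xi=0$, $h=i$ by the converse half of Theorem~\ref{t-6}, and then $\mu=\infty$. (One can also confirm your limiting formula without any continuity argument by evaluating \eqref{1501} at $\mu=\RE h$ and applying \eqref{e-35-mu-xi}, which returns exactly $V_{\Theta_{\infty,h}}(z)=\IM h/(m_\infty(z)+\RE h)$.) Your coefficient-matching step is also sound as stated: since $(-m_\infty)$ is a non-constant Herglotz-Nevanlinna function, its range on $\dC_+$ is open, so the quadratic identity in $m$ forces $\IM h=0$ for finite $\mu$, contradicting the dissipativity requirement $\IM h>0$ in \eqref{149}. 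In short: the computational route needs the caveat you yourself flagged, the duality route is airtight, and together they constitute a complete proof consistent with the framework of the paper.
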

We note that both L-systems $\Theta_{0,i}$  and $\Theta_{\infty,i}$ obtained in Theorems \ref{t-6} and \ref{t-7} share the same main operator
\begin{equation}\label{e-56-T}
    \left\{ \begin{array}{l}
 T_{i}\, y=-y^{\prime\prime}+q(x)y \\
 y'(\ell)=i\,y(\ell) \\
 \end{array} \right..
\end{equation}

Now we recall the definition of Weyl-Titchmarsh functions $m_\alpha(z)$.  Let  $\dA$ be a symmetric operator  of the form \eqref{128} with deficiency indices (1,1), generated by the differential operation $l(y)=-y^{\prime\prime}+q(x)y$. Let also $\varphi_\alpha(x,{z})$ and $\theta_\alpha(x,{z})$ be the solutions of the following Cauchy problems:
$$\left\{ \begin{array}{l}
 l(\varphi_\alpha)={z} \varphi_\alpha \\
 \varphi_\alpha(\ell,{z})=\sin\alpha \\
 \varphi'_\alpha(\ell,{z})=-\cos\alpha \\
 \end{array} \right., \qquad
\left\{ \begin{array}{l}
 l(\theta_\alpha)={z} \theta_\alpha \\
 \theta_\alpha(\ell,{z})=\cos\alpha \\
 \theta'_\alpha(\ell,{z})=\sin\alpha \\
 \end{array} \right.. $$
It is  known \cite{DanLev90}, \cite{Na68}, \cite{Ti62} that there exists an analytic in $\dC_\pm$  function $m_\alpha({z})$  for which
\begin{equation}\label{e-62-psi}
\psi(x,{z})=\theta_\alpha(x,{z})+m_\alpha({z})\varphi_\alpha(x,{z})
\end{equation}
belongs to $L_2[\ell,+\infty)$. It is easy to see that if $\alpha=\pi$, then $m_\pi({z})=m_\infty({z})$. The functions $ m_\alpha({z})$ and $m_\infty(z)$ are connected (see \cite{DanLev90}, \cite{Ti62}) by
\begin{equation}\label{e-59-LFT}
    m_\alpha({z})=\frac{\sin\alpha+m_\infty({z})\cos\alpha}{\cos\alpha-m_\infty({z})\sin\alpha}.
\end{equation}
We know \cite{Na68}, \cite{Ti62} that for any real $\alpha$ the function $-m_\alpha({z})$ is a Herglotz-Nevanlinna function. Also, modifying \eqref{e-59-LFT} slightly we obtain
\begin{equation}\label{e-61-Don}
    -m_\alpha(z)=\frac{\sin\alpha+m_\infty(z)\cos\alpha}{-\cos\alpha+m_\infty(z)\sin\alpha}
    =\frac{\cos\alpha+\frac{1}{m_\infty(z)}\sin\alpha}{\sin\alpha-\frac{1}{m_\infty(z)}\cos\alpha}.
\end{equation}
The following realization theorem (see \cite{BT18}) for Herglotz-Nevanlinna functions $-m_\alpha(z)$  is similar to Theorem \ref{t-6}.% and \ref{t-7}.
\begin{theorem}[\cite{BT18}]\label{t-8}%
Let $\dA$ be a  symmetric Schr\"odinger operator of the form \eqref{128} with deficiency indices $(1, 1)$ and locally summable potential in $\calH=L^2[\ell, \infty).$ If $m_\alpha(z)$ is the  function of $\dA$ described in \eqref{e-62-psi}, then the Herglotz-Nevanlinna function $(-m_\alpha(z))$ can be realized as the impedance function of  a Schr\"odinger L-system $\Theta_{\mu, h}$ of the form \eqref{149} with
\begin{equation}\label{e-62-h-mu}
    \mu=\tan\alpha\quad \textrm{and}\quad h=i.
\end{equation}

Conversely, let $\Theta_{\mu, h}$ be  a Schr\"odinger L-system of the form \eqref{149} with the symmetric operator $\dA$ such that $$V_{\Theta_{\mu, h}}(z)=-m_\alpha(z),$$ for all $z\in\dC_\pm$ and $\mu\in\mathbb R\cup\{\infty\}$. Then the parameters $\mu$ and $h$ defining $\Theta_{\mu, h}$ are given by \eqref{e-62-h-mu}, i.e., $\mu=\tan\alpha$ and $h=i$.
\end{theorem}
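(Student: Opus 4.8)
The plan is to derive both directions from a single pair of already-established identities: the explicit impedance formula \eqref{1501} for $\Theta_{\mu,h}$ and the linear-fractional identity \eqref{e-61-Don} expressing $-m_\alpha$ in terms of $m_\infty$. Since Section \ref{s4} already guarantees that, for every real $\mu$ and every $h$ with $\IM h>0$, the array \eqref{149} is a genuine L-system with impedance function given by \eqref{1501}, no separate verification of the L-system axioms is needed; the whole theorem becomes an algebraic comparison of two linear-fractional transformations of $m_\infty(z)$.

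For the direct statement I would substitute $h=i$ (so that $\IM h=1$, $\RE h=0$, $|h|^2=1$) and $\mu=\tan\alpha$ into \eqref{1501}; after clearing denominators by $\cos\alpha$ this collapses to
\begin{equation*}
V_{\Theta_{\tan\alpha,i}}(z)=\frac{m_\infty(z)+\tan\alpha}{(\tan\alpha)\,m_\infty(z)-1}
=\frac{m_\infty(z)\cos\alpha+\sin\alpha}{m_\infty(z)\sin\alpha-\cos\alpha},
\end{equation*}
whose right-hand side is exactly $-m_\alpha(z)$ by \eqref{e-61-Don}. The degenerate value $\alpha=\pi/2$ (where $\tan\alpha=\infty$) I would dispatch separately: here \eqref{e-59-LFT} gives $-m_{\pi/2}(z)=1/m_\infty(z)$, so the assertion is precisely Theorem \ref{t-7} with $\mu=\infty=\tan(\pi/2)$ and $h=i$.

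For the converse, assume $V_{\Theta_{\mu,h}}(z)=-m_\alpha(z)$ for all $z\in\dC_\pm$ and write $h=a+bi$ with $b>0$. Formula \eqref{1501} presents the left side as the linear-fractional transformation of $w:=m_\infty(z)$ with matrix $\left(\begin{smallmatrix} b & \mu b\\ \mu-a & \mu a-|h|^2\end{smallmatrix}\right)$, while \eqref{e-61-Don} presents $-m_\alpha(z)$ as the transformation with matrix $\left(\begin{smallmatrix}\cos\alpha & \sin\alpha\\ \sin\alpha & -\cos\alpha\end{smallmatrix}\right)$. Because $m_\infty$ is a non-constant analytic function, $w$ assumes infinitely many values, so the two transformations agree only if these matrices are proportional: there is a scalar $\lambda\neq0$ with $b=\lambda\cos\alpha$, $\mu b=\lambda\sin\alpha$, $\mu-a=\lambda\sin\alpha$, and $\mu a-|h|^2=-\lambda\cos\alpha$. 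The first two entries give $\mu=\tan\alpha$; comparing the second and third yields $a=\mu(1-b)$, and substituting this into the fourth (using $|h|^2=a^2+b^2$ and $\lambda\cos\alpha=b$) reduces, after dividing by $b>0$, to the factorization $(1-b)(\mu^2+1)=0$. Hence $b=\IM h=1$, whence $a=\RE h=0$, i.e. $h=i$.

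The only real subtlety is the converse, and within it the elimination leading to $(1-b)(\mu^2+1)=0$: one must carry the algebra far enough to reach this factorization, which is what pins down $\IM h=1$ and then $\RE h=0$. I would also state explicitly that the proportionality-of-coefficients step is legitimate precisely because $m_\infty$ is non-constant (so it is not confined to two values, where distinct linear-fractional transformations could coincide). Beyond the already-available formulas \eqref{1501} and \eqref{e-61-Don}, no genuinely analytic input is needed.
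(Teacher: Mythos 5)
A preliminary remark: the paper never proves Theorem \ref{t-8} --- it is imported from \cite{BT18} with only the resulting identities \eqref{e-71-VW} recorded --- so your proposal has to be judged on its own terms against the formulas the paper does supply, namely \eqref{1501} and \eqref{e-61-Don}. On those terms the core of your argument is sound and is, in spirit, the same computation the authors rely on. The direct part is exactly the substitution recorded in \eqref{e-71-VW}, with $\alpha=\pi/2$ correctly delegated to Theorem \ref{t-7}. Your converse via proportionality of Möbius matrices is legitimate: both matrices are automatically invertible (the left one has determinant $-b\bigl[(\mu-a)^2+b^2\bigr]\neq 0$ since $b>0$, the right one has determinant $-1$), $m_\infty$ is non-constant so the maps agree at more than two points, and your elimination is correct --- substituting $a=\mu(1-b)$ and $|h|^2=\mu^2(1-b)^2+b^2$ into $\mu a-|h|^2=-b$ does collapse to $b(1-b)(\mu^2+1)=0$, whence $b=1$ and $a=0$.

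There is, however, one genuine (if small) gap in the converse. The hypothesis explicitly allows $\mu\in\mathbb{R}\cup\{\infty\}$, and the conclusion $\mu=\tan\alpha$ includes the value $\mu=\infty$ (when $\alpha\equiv\pi/2\pmod{\pi}$); but your matrix $\left(\begin{smallmatrix} b & \mu b\\ \mu-a & \mu a-|h|^2\end{smallmatrix}\right)$ and the four equations extracted from it are meaningless for $\mu=\infty$, and your derivation of $\mu=\tan\alpha$ from the first two entries implicitly divides by $\cos\alpha$. Two cases are therefore left untreated: (i) $\mu=\infty$, where \eqref{1501} must be replaced by its limiting form $V_{\Theta_{\infty,h}}(z)=\IM h\,/\,(m_\infty(z)+\RE h)$, i.e.\ the matrix $\left(\begin{smallmatrix} 0 & b\\ 1 & a\end{smallmatrix}\right)$; proportionality then forces $\cos\alpha=0$, $a=0$, $b=1$, so $h=i$ and $\mu=\infty=\tan\alpha$, as required; and (ii) $\cos\alpha=0$ with finite $\mu$, where proportionality would force $b=\lambda\cos\alpha=0$, contradicting $\IM h>0$, so this case is vacuous. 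With these two cases added your converse covers the full hypothesis; as written, it proves a statement strictly weaker than the one asserted.
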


We note that when $\alpha=\pi$ we obtain $\mu_\alpha=0$, $m_\pi(z)=m_\infty(z)$, and the realizing Schr\"odinger L-system $\Theta_{0,i}$ is thoroughly described  in \cite[Section 5]{BT18}. If $\alpha=\pi/2$, then  we get $\mu_\alpha=\infty$, $-m_\alpha(z)=1/m_\infty(z)$, and the realizing Schr\"odinger L-system is $\Theta_{\infty,i}$ (see \cite[Section 5]{BT18}). Assuming that $\alpha\in(0,\pi]$ and  neither $\alpha=\pi$ nor $\alpha=\pi/2$ we give the description of a Schr\"odinger L-system $\Theta_{\mu_\alpha,i}$ realizing $-m_\alpha(z)$ as follows.
\begin{equation}\label{e-64-sys}
    \Theta_{\tan\alpha, i}= \begin{pmatrix} \bA_{\tan\alpha, i}&K_{\tan\alpha, i}&1\cr \calH_+ \subset
L_2[\ell,+\infty) \subset \calH_-& &\dC\cr \end{pmatrix},
\end{equation}
where
\begin{equation}\label{e-65-star}
\begin{split}
&\bA_{\tan\alpha,i}\, y=l(y)-\frac{1}{\tan\alpha-i}[y^{\prime}(\ell)-iy(\ell)][(\tan\alpha)\delta(x-\ell)+\delta^{\prime}(x-\ell)], \\
&\bA^*_{\tan\alpha,i}\, y=l(y)-\frac{1}{\tan\alpha+i}\,[y^{\prime}(\ell)+iy(\ell)][(\tan\alpha)\delta(x-\ell)+\delta^{\prime}(x-\ell)],
\end{split}
\end{equation}
$K_{\tan\alpha, i}\,{c}=c\,g_{\tan\alpha, i}$, $(c\in \dC)$ and
\begin{equation}\label{e-66-g}
g_{\tan\alpha, i}=(\tan\alpha)\delta(x-\ell)+\delta^{\prime}(x-\ell).
\end{equation}
Also,
\begin{equation}\label{e-71-VW}
\begin{aligned}
    V_{\Theta_{\tan\alpha, i}}(z)&=-m_\alpha(z)\\
     W_{\Theta_{\tan\alpha, i}}(z)&=\frac{\tan\alpha-i}{\tan\alpha+i}\cdot\frac{m_\infty(z)-i}{m_\infty(z)+i}=(-e^{2\alpha i})\,\frac{m_\infty(z)-i}{m_\infty(z)+i}.
    \end{aligned}
\end{equation}

The realization theorem for Herglotz-Nevanlinna functions $1/m_\alpha(z)$ is similar to Theorem \ref{t-7} and can be found in \cite{BT18}.

%%%%%%%%%%%%%%%%%%%%%%%%%

\section{Non-negative Schr\"odinger  operator and sectorial L-systems}\label{s7}

Now let us assume that  $\dA$ is a non-negative (i.e., $(\dA f,f) \geq 0$ for all $f \in \dom(\dA)$) symmetric operator  of the form \eqref{128} with deficiency indices (1,1), generated by the differential operation $l(y)=-y^{\prime\prime}+q(x)y$. The following theorem takes place.
\begin{theorem}[\cite{T87},  \cite{Ts81}, \cite{Ts80}]\label{t-10}
Let $\dA$ be a nonnegative symmetric Schr\"odinger operator of the form \eqref{128} with deficiency indices $(1, 1)$ and locally summable potential in $\calH=L^2[\ell,\infty).$ Consider operator $T_h$ of the form \eqref{131}.  Then
 \begin{enumerate}
\item operator $\dA$ has more than one non-negative self-adjoint extension, i.e., the Friedrichs extension $A_F$ and the Kre\u{\i}n-von Neumann extension $A_K$ do not coincide, if and only if $m_{\infty}(-0)<\infty$;
 \item operator $T_h$, ($h=\bar h$) coincides with the Kre\u{\i}n-von Neumann extension $A_K$ if and  only if $h=-m_{\infty}(-0)$;
\item operator $T_h$ is accretive if and only if
\begin{equation}\label{138}
\RE h\geq-m_\infty(-0);
\end{equation}
\item operator $T_h$, ($h\ne\bar h$) is $\beta$-sectorial if and only if  $\RE h >-m_{\infty}(-0)$ holds;
\item operator $T_h$, ($h\ne\bar h$) is accretive but not $\beta$-sectorial for any $\beta\in (0, \frac{\pi}{2})$ if and only if $\RE h=-m_{\infty}(-0)$ \item If $T_h, (\IM h>0)$ is $\beta$-sectorial,
then the exact angle $\beta$ can be calculated via
\begin{equation}\label{e10-45}
\tan\beta=\frac{\IM h}{\RE h+m_{\infty}(-0)}.
\end{equation}
\end{enumerate}
\end{theorem}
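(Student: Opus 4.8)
The plan is to reduce all six assertions to one explicit computation of the sesquilinear form $(T_h y,y)$ together with a single variational identity linking the Dirichlet energy to the boundary value $m_\infty(-0)$. First I would integrate by parts: for $y\in\dom(T_h)\subset D^*$, the deficiency indices $(1,1)$ force the limit-point case at $+\infty$, so $y,y'$ decay and the boundary term at infinity vanishes; using the boundary condition $y'(\ell)=h\,y(\ell)$ one obtains
\begin{equation*}
(T_h y,y)=\calQ[y]+h\,|y(\ell)|^2,\qquad \calQ[y]:=\int_\ell^\infty\bigl(|y'|^2+q\,|y|^2\bigr)\,dx,
\end{equation*}
whence $\RE(T_h y,y)=\calQ[y]+(\RE h)\,|y(\ell)|^2$ and $\IM(T_h y,y)=(\IM h)\,|y(\ell)|^2$.

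Second, and this is the crux, I would establish the variational identity $\inf\{\calQ[y]:|y(\ell)|=1\}=m_\infty(-0)$, valid precisely when $m_\infty(-0)$ is finite. Take $u:=\varphi(\cdot,0)$, the Weyl $L_2$ solution of $l(u)=0$ obtained in the limit $z\to-0$; since $\varphi(\ell,z)=-1$ and $\varphi'(\ell,z)=m_\infty(z)$, a single integration by parts gives $\calQ[u]=m_\infty(-0)$ when $|u(\ell)|=1$. For any $y$ with the same boundary value, write $y=u+w$ with $w(\ell)=0$; because $u$ solves $l(u)=0$, the cross term $\int_\ell^\infty(u'\bar{w'}+q\,u\bar w)\,dx$ vanishes after integration by parts, so $\calQ[y]=\calQ[u]+\calQ[w]$, and $\calQ[w]\ge0$ by non-negativity of $\dA$ on the Dirichlet form domain. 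Hence $u$ is the minimizer and $\calQ[y]\ge m_\infty(-0)\,|y(\ell)|^2$ with equality along $u$; this sharp bound drives everything that follows.

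Parts (3)--(6) are then immediate. From the sharp bound $\RE(T_h y,y)\ge(\RE h+m_\infty(-0))|y(\ell)|^2$, so $T_h$ is accretive iff $\RE h\ge-m_\infty(-0)$, which is (3); at the borderline $\RE h=-m_\infty(-0)$ the infimum of $\RE(T_h y,y)/|y(\ell)|^2$ is zero and attained along $u$, so accretivity holds without sectoriality, giving (5). For $\IM h>0$ the sectoriality inequality $(\cot\beta)(\IM h)|y(\ell)|^2\le\calQ[y]+(\RE h)|y(\ell)|^2$ is tightest along the minimizer, where it reads $(\cot\beta)\IM h\le\RE h+m_\infty(-0)$; this is solvable for some $\beta\in(0,\pi/2)$ iff $\RE h>-m_\infty(-0)$, which is (4), and maximizing the ratio $|\IM(T_h y,y)|/\RE(T_h y,y)$ over $y$ (again attained as $\calQ[y]\to m_\infty(-0)|y(\ell)|^2$) yields the exact angle $\tan\beta=\IM h/(\RE h+m_\infty(-0))$, which is (6).

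For the self-adjoint statements (1)--(2) I would invoke the Kre\u{\i}n--Vishik--Birman description of non-negative self-adjoint extensions. The Friedrichs extension $A_F$ is the Dirichlet realization $y(\ell)=0$ (the formal limit $h\to\infty$), while among the real-$h$ extensions $T_h$ the non-negative ones are exactly those with $h\ge-m_\infty(-0)$; the minimal one, $h=-m_\infty(-0)$, is by construction the Kre\u{\i}n--von Neumann extension $A_K$, giving (2). These two coincide iff the borderline value $-m_\infty(-0)$ is itself the Dirichlet point, i.e.\ iff $m_\infty(-0)$ fails to be finite; equivalently $A_F\ne A_K$ iff $m_\infty(-0)$ is finite, which is (1). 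The main obstacle is the variational identity of the second step: one must justify that $\varphi(\cdot,0)\in L_2$ and lies in the form domain, control the limit $z\to-0$ of the Weyl solution (this is exactly where finiteness of $m_\infty(-0)$ enters), and then match the boundary parameter $h=-m_\infty(-0)$ with the abstract Kre\u{\i}n formula for $A_K$ so that the variational and extension-theoretic pictures agree.
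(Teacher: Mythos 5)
First, a point of comparison: the paper itself contains no proof of Theorem~\ref{t-10}; it is imported wholesale from \cite{T87}, \cite{Ts81}, \cite{Ts80}, so your proposal can only be judged on its own merits. Your strategy --- the boundary-form identity $(T_h y,y)=\calQ[y]+h\,|y(\ell)|^2$, the variational identity $\inf\{\calQ[y]:|y(\ell)|=1\}=m_\infty(-0)$, then (3)--(6) by inspection and (1)--(2) from the Kre\u{\i}n--Vishik--Birman ordering --- is the natural, classical route, and the reductions of (3)--(6) to these two identities are correct. But there is a genuine gap at the crux. Your claim that $u=\varphi(\cdot,0)$ is an $L_2$ solution lying in the form domain, and that the infimum is \emph{attained} along it (``equality along $u$''), is false in general: for $q\equiv 0$ on $[0,\infty)$ one has $m_\infty(z)=-i\sqrt z$, $m_\infty(-0)=0$, the zero-energy Weyl limit is a constant function, which is not in $L_2[0,\infty)$, and the infimum $\inf\{\int_0^\infty|y'|^2\,dx:|y(0)|=1\}=0$ is not attained. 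Your closing paragraph flags this as ``the main obstacle\dots one must justify that $\varphi(\cdot,0)\in L_2$'' --- but this cannot be justified, because it is simply not true. The repair is standard: run your orthogonal decomposition at $z<0$ with the genuine $L_2$ Weyl solution $u_z=\varphi(\cdot,z)$, for which $\calQ[u_z]-z\|u_z\|^2=-u_z'(\ell)\overline{u_z(\ell)}=m_\infty(z)$; writing $y=c\,u_z+w$ with $w(\ell)=0$ and using $\calQ[w]\ge 0$ gives $\calQ[y]-z\|y\|^2\ge m_\infty(z)|y(\ell)|^2$, and letting $z\uparrow 0$ yields the lower bound, while the same $u_z$ serve as trial functions for the upper bound. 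The identity then holds as an infimum that in general is \emph{not} attained --- which is all that (3)--(6) actually require.

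Two further steps are asserted rather than proved. First, the identity $(T_h y,y)=\calQ[y]+h|y(\ell)|^2$ needs $\calQ[y]<\infty$ and $y'(x)\overline{y(x)}\to 0$ as $x\to\infty$ for every $y\in\dom(T_h)$; your justification ``limit-point case, so $y,y'$ decay'' is not valid --- limit point yields only the vanishing of the Wronskian-type brackets, hence of $\IM\bigl(y'(x)\overline{y(x)}\bigr)$, and maximal-domain functions need not decay pointwise. The correct statement exploits the non-negativity (semiboundedness) of $\dA$ and is a known but nontrivial lemma; the same issue affects the cross term $u'(R)\overline{w(R)}$ in your decomposition. Second, for the ``only if'' halves of (3)--(5) and the exactness of the angle in (6), the near-minimizers must lie in $\dom(T_h)$, whereas your $u_z$ violate the boundary condition $y'(\ell)=h\,y(\ell)$; an approximation argument producing admissible $y$ with $\calQ[y]/|y(\ell)|^2\to m_\infty(-0)$ is needed. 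Both points are repairable by standard arguments (and are what the cited sources actually carry out), but as written the attainment claim is wrong and these supporting lemmas are missing.
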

For the remainder of this paper we assume that $m_{\infty}(-0)<\infty$. Then according to Theorem \ref{t-10} above (see also \cite{AT2009}, \cite{Ts2}, \cite{Ts80}) we have the existence of the operator $T_h$, ($\IM h>0$) that is accretive and/or sectorial.
It  was shown in \cite{ABT} that if $T_h \;(\IM h>0)$ is an accretive Schr\"odinger operator of the form \eqref{131}, then for all real $\mu$ satisfying the following inequality
\begin{equation}\label{151}
\mu \geq \frac {(\IM h)^2}{m_\infty(-0)+\RE h}+\RE h,
\end{equation}
formulas \eqref{137} define the set of all accretive $(*)$-extensions $\bA_{\mu,h}$ of the operator $T_h$. Moreover, an accretive $(*)$-extensions $\bA_{\mu,h}$ of a sectorial  operator $T_h$ with exact angle of sectoriality $\beta\in(0,\pi/2)$ also preserves the same exact angle of sectoriality if and only if $\mu=+\infty$ in \eqref{137} (see \cite[Theorem 3]{BT-15}). Also, $\bA_{\mu,h}$ is accretive but not  $\beta$-sectorial for any $\beta\in(0,\pi/2)$ ($*$)-extension of $T_h$
 if and only if in \eqref{137}
\begin{equation}\label{e10-134}
\mu=\frac{(\IM h)^2}{m_\infty(-0)+\RE h}+\RE h,
\end{equation}
 (see \cite[Theorem 4]{BT-15}).
An accretive operator $T_h$ has a unique accretive $(*)$-extension $\bA_{\infty,h}$ if and only if
$\RE h=-m_\infty(-0).$
In this case this unique $(*)$-extension has the form
\begin{equation}\label{153}
\begin{aligned}
&\bA_{\infty,h} y=-y^{\prime\prime}+q(x)y+[hy(\ell)-y^{\prime}(\ell)]\,\delta(x-\ell), \\
&\bA^*_{\infty,h} y=-y^{\prime\prime}+q(x)y+[\overline h
y(\ell)-y^{\prime}(\ell)]\,\delta(x-\ell).
\end{aligned}
\end{equation}
%Now we will see how the additional requirement of non-negativity affects the realization of functions $-m_\infty(z)$ and $1/m_\infty(z)$.
%\begin{theorem}\label{t-11}%
%Let $\dA$ be a non-negative symmetric Schr\"odinger operator of the form \eqref{128} with deficiency indices $(1, 1)$ and locally summable potential in $\calH=L^2[\ell, \infty).$ If $m_\infty(z)$ is the  Weyl-Titchmarsh function of $\dA$, then the L-system $\Theta_{0, i}$ of the form \eqref{e-38-sys} realizing the  function $(-m_\infty(z))$ is never accretive. The L-system $\Theta_{\infty, i}$ of the form \eqref{e-49-sys} realizing the  function $1/m_\infty(z)$ is accretive if and only if $m_\infty(-0)\ge0$.
% \end{theorem}

Now we are going to turn to functions $m_\alpha(z)$ described by \eqref{e-62-psi}-\eqref{e-59-LFT} and associated with the non-negative operator $\dA$ above. We need to see how the parameter $\alpha$ in the definition of $m_\alpha(z)$ affects the  L-system realizing $(-m_\alpha(z))$. This  question was answered in \cite[Theorem 6.3]{BT18}. It tells us that if the non-negative symmetric Schr\"odinger operator is such that $m_{\infty}(-0)\ge0$, then the L-system $\Theta_{\tan\alpha, i}$ of the form \eqref{e-64-sys} realizing the  function $(-m_\alpha(z))$ is accretive if and only if
\begin{equation}\label{e-78-angles}
\tan\alpha\ge \frac{1}{m_{\infty}(-0)}.
\end{equation}
%\begin{theorem}[\cite{BT18}]\label{t-12}%
%Let $\dA$ be a non-negative symmetric Schr\"odinger operator of the form \eqref{128} with deficiency indices $(1, 1)$ and locally summable potential in $\calH=L^2[\ell, \infty)$ and such that $m_{\infty}(-0)\ge0$. If $m_\alpha(z)$ is  described by \eqref{e-62-psi}-\eqref{e-59-LFT}, then the L-system $\Theta_{\tan\alpha, i}$ of the form \eqref{e-64-sys} realizing the  function $(-m_\alpha(z))$ is accretive if and only if
%\begin{equation}\label{e-78-angles}
%\tan\alpha\ge \frac{1}{m_{\infty}(-0)}.
%\end{equation}
% \end{theorem}
Note that if $m_\infty(-0)=0$ in \eqref{e-78-angles}, then $\alpha=\pi/2$ and $-m_{\frac{\pi}{2}}(z)={1}/{m_\infty(z)}$. Also, from \cite[Theorem 6.2]{BT18} we know that  if $m_\infty(-0)\ge0$, then ${1}/{m_\infty(z)}$ is realized by an accretive system $\Theta_{\infty, i}$.% of the form \eqref{e-49-sys}. %The system $\Theta_{\infty, i}$ is also extremal

Now once we established a criteria for an L-system realizing $(-m_\alpha(z))$ to be accretive, we can look into more of its properties.
There are two choices for an accretive L-system $\Theta_{\tan\alpha, i}$: it is either (1) \textit{accretive sectorial} or (2) \textit{accretive extremal}. In the case (1) we have that $\bA_{\tan\alpha, i}$ of the form \eqref{e-65-star} is $\beta_1$-sectorial with some angle of sectoriality $\beta_1$ that can only exceed the exact angle of sectoriality $\beta$ of $T_i$. In the case (2) the state-space operator $\bA_{\tan\alpha, i}$ is extremal (not sectorial for any $\beta\in(0,\pi/2)$) and is a $(*)$-extension of $T_i$ that itself can be either $\beta$-sectorial or extremal.  These possibilities were described in details in \cite[Theorem 6.4]{BT18}. In particular, it was shown that for the accretive L-system
 $\Theta_{\tan\alpha, i}$   realizing the  function $(-m_\alpha(z))$  the following is true:
\begin{enumerate}
  \item if $m_{\infty}(-0)=0$, then there is only one accretive L-system $\Theta_{\infty,i}$ realizing $(-m_\alpha(z))$. This L-system is extremal and its main operator $T_i$ is extremal as well.
  \item if $m_\infty(-0)>0$, then $T_i$ is $\beta$-sectorial for $\beta\in(0,\pi/2)$ and
    \begin{enumerate}
      \item if $\tan\alpha={1}/{m_\infty(-0)}$, then $\Theta_{\tan\alpha, i}$ is extremal;
      \item if $\frac{1}{m_\infty(-0)}<\tan\alpha<+\infty$, then $\Theta_{\tan\alpha, i}$ is $\beta_1$-sectorial with $\beta_1>\beta$;
      \item if $\tan\alpha=+\infty$, then $\Theta_{\infty, i}$ is $\beta$-sectorial.
    \end{enumerate}
  \end{enumerate}
\begin{figure}
  % Requires \usepackage{graphicx}
  \begin{center}
  \includegraphics[width=110mm]{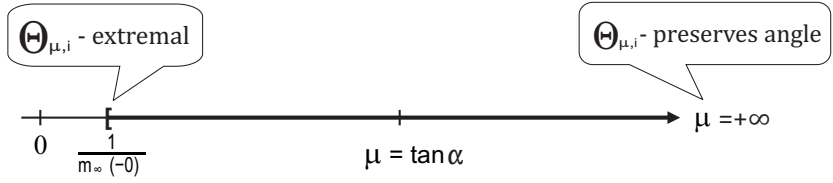}
  \caption{Accretive L-systems $\Theta_{\mu, i}$}\label{fig-1}
  \end{center}
\end{figure}
Figure \ref{fig-1} above describes the dependence of the properties of realizing $(-m_\alpha(z))$ L-systems on the value of $\mu$ and hence $\alpha$. The bold part of the real line depicts values of $\mu=\tan\alpha$ that produce accretive L-systems $\Theta_{\mu, i}$.

Additional analytic properties of the functions $(-m_\infty(z))$, $1/m_\infty(z)$, and \break $(-m_\alpha(z))$ were described in \cite[Theorem 6.5]{BT18}. It was proved there that under the current set of assumptions we have:
 \begin{enumerate}
   \item the  function $1/m_\infty(z)$ is Stieltjes if and only if $m_\infty(-0)\ge0$;
   \item the  function $(-m_\infty(z))$ is never Stieltjes;\footnote{It will be shown in an upcoming paper that if $m_\infty(-0)\ge0$, then the  function $(-m_\infty(z))$ is actually inverse Stieltjes.}
   \item the  function $(-m_\alpha(z))$ given by \eqref{e-59-LFT} is Stieltjes if and only if $$0<\frac{1}{m_\infty(-0)}\le\tan\alpha.$$
 \end{enumerate}

%\section{Sectorial L-systems with Schr\"odinger operator}\label{s6}
Now we are going to turn to the case when our realizing L-system $\Theta_{\tan\alpha, i}$ is accretive sectorial.
To begin with let  $\Theta$ be an L-system of the form \eqref{149}, where $\bA$ is a ($*$)-extension \eqref{137} of the accretive Schr\"odinger operator $T_h$. Here we summarize and list some known facts about possible accretivity and  sectoriality of $\Theta$.
\begin{itemize}
 \item The operator $\bA_{\mu,h}$ of $\Theta_{\mu,h}$ is accretive if and only if \eqref{151} holds (see  \cite{ABT}).
  \item According to Theorem \ref{t-6} if an accretive  operator $T_h$, ($\IM h>0$)  is $\beta$-sectorial, then \eqref{e10-45} holds. Conversely, if $h$, ($\IM h>0$) is such that $\RE h>-m_\infty(-0),$ then operator $T_h$ of the form \eqref{131} is $\beta$-sectorial and $\beta$ is determined by \eqref{e10-45}.
  \item $T_h$  is accretive but not $\beta$-sectorial for any $\beta\in(0,\pi/2)$ if and only if $\RE h=-m_\infty(-0)$.

  \item If $\Theta_{\mu,h}$ is such that $\mu=+\infty$, then $V_{\Theta_{\infty,h}}(z)$ belongs to the class $S^{0,\beta}$. In the case when  $\mu\ne+\infty$ we have $V_{\Theta_{\mu,h}}(z)\in S^{\beta_1,\beta_2}$ (see \cite{B2011}).
  \item The operator  $\bA_{\mu,h}$ is a $\beta$-sectorial ($*$)-extension of $T_h$ (with the same angle of sectoriality) if and only if $\mu=+\infty$ in \eqref{137} (see \cite{ABT}, \cite{BT-15}).
  \item   If $T_h$ is $\beta$-sectorial with the exact angle of sectoriality $\beta$, then it admits only one $\beta$-sectorial ($*$)-extension $\bA_{\mu,h}$ with the same angle of sectoriality $\beta$. Consequently, $\mu=+\infty$ and $\bA_{\mu,h}=\bA_{\infty,h}$ has the form \eqref{153}.
  \item  A ($*$)-extension $\bA_{\mu,h}$ of $T_h$ is accretive but not  $\beta$-sectorial for any $\beta\in(0,\pi/2)$   if and only if the value of  $\mu$ in \eqref{137} is given by \eqref{e10-134}.
\end{itemize}
Note that it follows from the above that any $\beta$-sectorial operator $T_h$ with the exact angle of sectoriality $\beta\in(0,\pi/2)$ admits only one accretive ($*$)-extension $\bA_{\mu,h}$ that is not  $\beta$-sectorial for any $\beta\in(0,\pi/2)$. This extension takes form \eqref{137} with $\mu$  given by \eqref{e10-134}.

%%%%%%%%%%%%%%%%%
Now let us consider a function $(-m_\alpha(z))$ and Schr\"odinger L-system $\Theta_{\tan\alpha, i}$ of the form \eqref{e-64-sys} that realizes it. According to \cite[Theorem 6.4-6.5]{BT18} this L-system $\Theta_{\tan\alpha, i}$ is sectorial if and only if
\begin{equation}\label{e-45}
\tan\alpha>\frac{1}{m_{\infty}(-0)}.
\end{equation}
If we assume that L-system $\Theta_{\tan\alpha, i}$ is $\beta$-sectorial, then its impedance function \break $V_{\Theta_{\tan\alpha, i}}(z)=-m_\alpha(z)$ belongs to certain sectorial classes discussed in Section \ref{s3}. Namely, $(-m_\alpha(z))\in S^\beta$. The following theorem provides more refined properties of $(-m_\alpha(z))$ for this case.
\begin{theorem}\label{t-15}
Let $\Theta_{\tan\alpha, i}$ be the accretive L-system of the form \eqref{e-64-sys} realizing the  function  $(-m_\alpha(z))$  associated with the non-negative operator $\dA$. Let also $\bA_{\tan\alpha,i}$ be a $\beta$-sectorial $(*)$-extension of $T_i$ defined by \eqref{e-56-T}. Then the  function $(-m_\alpha(z))$  belongs to the class $S^{\beta_1,\beta_2}$, $\tan\beta_2\le\tan\beta$,
\begin{equation}\label{e-46}
    \tan\beta_1=\cot\alpha,%\frac{\tan\alpha+m_\infty(-\infty)}{(\tan\alpha) m_\infty(-\infty)-1},
\end{equation}
and
\begin{equation}\label{e-47}
    \tan\beta_2=\frac{\tan\alpha+m_\infty(-0)}{(\tan\alpha) m_\infty(-0)-1}.
\end{equation}
Moreover, the operator $T_i$ is $(\beta_2-\beta_1)$-sectorial with the exact  angle of sectoriality $(\beta_2-\beta_1)$.
 \end{theorem}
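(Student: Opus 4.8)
The plan is to derive the bulk of the statement directly from the general correspondence between the classes $S^\beta$ and $S^{\beta_1,\beta_2}$ recalled in Section \ref{s3}, and then to pin down the two angles $\beta_1,\beta_2$ by an explicit limit computation. By Theorem \ref{t-8} (see also \eqref{e-71-VW}) the impedance function of $\Theta_{\tan\alpha,i}$ equals $V_{\Theta_{\tan\alpha,i}}(z)=-m_\alpha(z)$, and the underlying symmetric operator $\dA$ of the form \eqref{128} is densely defined, non-negative, and has deficiency indices $(1,1)$. Since by hypothesis $\bA_{\tan\alpha,i}$ is a $\beta$-sectorial $(*)$-extension of $T_i$, the general result quoted in Section \ref{s3} applies and yields at once that $(-m_\alpha(z))\in S^{\beta_1,\beta_2}$ with $\tan\beta_2\le\tan\beta$, and that $T_i$ is $(\beta_2-\beta_1)$-sectorial with exact angle of sectoriality $(\beta_2-\beta_1)$. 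Thus the only remaining task is the identification \eqref{e-46}--\eqref{e-47} of $\beta_1$ and $\beta_2$.

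For this I would return to the defining relations \eqref{e9-156}, that is $\tan\beta_1=\lim_{x\to-\infty}(-m_\alpha(x))$ and $\tan\beta_2=\lim_{x\to-0}(-m_\alpha(x))$, and evaluate both limits from the closed form \eqref{e-61-Don}. For $\beta_1$ I would use the second expression in \eqref{e-61-Don},
\[
-m_\alpha(x)=\frac{\cos\alpha+\frac{1}{m_\infty(x)}\sin\alpha}{\sin\alpha-\frac{1}{m_\infty(x)}\cos\alpha},
\]
together with the classical asymptotics of the half-line Schr\"odinger $m$-function, namely $|m_\infty(x)|\to\infty$, equivalently $1/m_\infty(x)\to0$, as $x\to-\infty$. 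Passing to the limit then gives $\tan\beta_1=\cos\alpha/\sin\alpha=\cot\alpha$, which is \eqref{e-46}. For $\beta_2$ I would use the first expression in \eqref{e-61-Don} and substitute the finite boundary value $m_\infty(-0)$ (finite by the standing assumption of Section \ref{s7}); dividing numerator and denominator by $\cos\alpha$ converts $\lim_{x\to-0}(-m_\alpha(x))$ into $\frac{\tan\alpha+m_\infty(-0)}{(\tan\alpha)m_\infty(-0)-1}$, which is \eqref{e-47}.

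The only genuinely delicate ingredient is the boundary behaviour of $m_\infty$ on the negative half-axis. The value $m_\infty(-0)$ is finite by the running hypothesis $m_\infty(-0)<\infty$, so the substitution used for $\beta_2$ is legitimate; moreover, since $(-m_\infty)$ is a Herglotz-Nevanlinna function holomorphic and real-valued on $(-\infty,0)$, it is monotone there, which guarantees that both one-sided limits in \eqref{e9-156} exist as genuine (finite or infinite) boundary values. The asymptotic $1/m_\infty(x)\to0$ as $x\to-\infty$ used for $\beta_1$ is the one external fact I would cite explicitly; it is standard for half-line Schr\"odinger $m$-functions. Once it is in place, both limit evaluations are routine, and together with the membership and sectoriality already extracted from Section \ref{s3} they complete the proof.
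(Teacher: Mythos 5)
Your proposal is correct and follows essentially the same route as the paper's proof: both invoke the general $S^{\beta}$--$S^{\beta_1,\beta_2}$ correspondence of Section~\ref{s3} (i.e., the quoted result on $\beta$-sectorial $(*)$-extensions) for the class membership, the bound $\tan\beta_2\le\tan\beta$, and the exact $(\beta_2-\beta_1)$-sectoriality of $T_i$, and then identify $\beta_1,\beta_2$ by evaluating the limits in \eqref{e9-156} using $m_\infty(x)\to+\infty$ as $x\to-\infty$ and the finiteness of $m_\infty(-0)$. The only cosmetic difference is which algebraic form of $-m_\alpha$ you use for each limit; the paper also adds a brief check that \eqref{e-46}--\eqref{e-47} together with \eqref{e-45} give $0<\beta_2-\beta_1<\pi/2$, so the notion of $(\beta_2-\beta_1)$-sectoriality applies.
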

 \begin{proof}
It is given that $\Theta_{\tan\alpha, i}$ is $\beta$-sectorial and hence \eqref{e-45} holds. For further convenience we re-write $(-m_\alpha(z))$ as
\begin{equation}\label{e-48}
    -m_\alpha(z)=\frac{\sin\alpha+m_\infty(z)\cos\alpha}{-\cos\alpha+m_\infty(z)\sin\alpha}
    =\frac{\tan\alpha+m_\infty(z)}{(\tan\alpha) m_\infty(z)-1}.
\end{equation}
Since under our assumption  $\Theta_{\tan\alpha, i}$ is $\beta$-sectorial, then its impedance function $V_{\Theta_{\tan\alpha, i}}(z)=-m_\alpha(z)$ belongs to certain sectorial classes discussed in Section \ref{s3}. Namely, $-m_\alpha(z)\in S^\beta$ and $-m_\alpha(z)\in S^{\beta_1,\beta_2}$. In order to describe $\beta_1$ we take into account (see \cite[Section 10.3]{ABT}) that $\lim_{x\to-\infty} m_\infty(x)=+\infty$ to obtain
\begin{equation*}%\label{e-47}
    \begin{aligned}
    \tan\beta_1&=\lim_{x\to-\infty}(-m_\alpha(x))=\frac{\tan\alpha+m_\infty(-\infty)}{(\tan\alpha) m_\infty(-\infty)-1}=\frac{\frac{\tan\alpha}{m_\infty(-\infty)}+1}{\tan\alpha -\frac{1}{m_\infty(-\infty)}}\\
    &=\frac{1}{\tan\alpha}=\cot\alpha.
    \end{aligned}
\end{equation*}
In order to get $\beta_2$ we simply pass to the limit in \eqref{e-48}
\begin{equation*}%\label{e-47}
    \tan\beta_2=\lim_{x\to-0}(-m_\alpha(x))=\frac{\tan\alpha+m_\infty(-0)}{(\tan\alpha) m_\infty(-0)-1}.
\end{equation*}
 The above confirms \eqref{e-46} and \eqref{e-47}. In order to show the rest, we apply \cite[Theorem 9.8.4]{ABT}. This theorem states that if  $\bA$ is a $\beta$-sectorial $(*)$-extension of a main operator $T$ of an L-system $\Theta$, then the impedance function $V_\Theta(z)$ belongs to the class $S^{\beta_1,\beta_2}$, $\tan\beta_2\le\tan\beta$, and  $T$ is $(\beta_2-\beta_1)$-sectorial with the exact  angle of sectoriality $(\beta_2-\beta_1)$. It can also be checked directly that formulas \eqref{e-46} and \eqref{e-47} (under condition \eqref{e-45}) imply $0<\beta_2-\beta_1<\pi/2$ and hence the definition of $(\beta_2-\beta_1)$-sectoriality applies correctly.
 \end{proof}

\begin{figure}
  % Requires \usepackage{graphicx}
  \begin{center}
  \includegraphics[width=90mm]{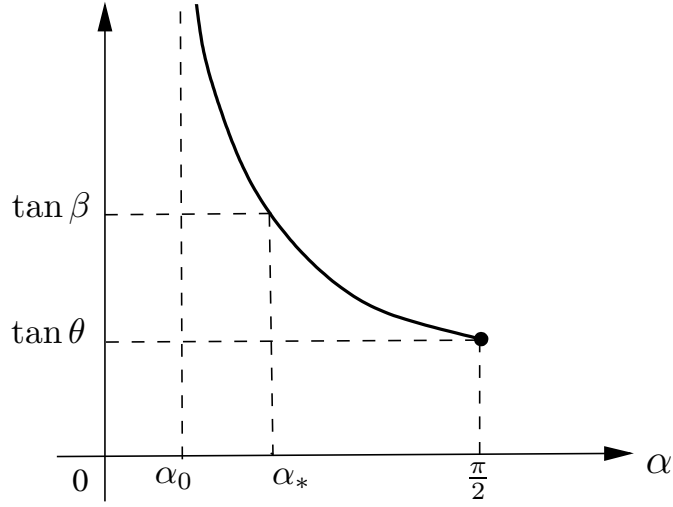}
  \caption{Angle of sectoriality $\beta$. Here $\alpha_0=\arctan\big(\frac{1}{m_{\infty}(-0)}\big)$.}\label{fig-2}
  \end{center}
\end{figure}
%%%%%%%%%%%%%%%%%

Now we state and prove the following.
\begin{theorem}\label{t-16}
Let $\Theta_{\tan\alpha, i}$ be an accretive  L-system of the form \eqref{e-64-sys} that realizes $(-m_\alpha(z))$, where $\bA_{\tan\alpha,i}$ is a ($*$)-extension of a $\theta$-sectorial operator $T_i$ with exact angle of sectoriality $\theta$. Let also  $\alpha_*\in\left(\arctan\big(\frac{1}{m_{\infty}(-0)}\big),\frac{\pi}{2}\right)$ be a fixed value that defines  $\bA_{\tan\alpha_*,i}$ via \eqref{137},   and  $(-m_\alpha(z))\in S^{\beta_1,\beta_2}$.
Then a ($*$)-extension $\bA_{\tan\alpha,i}$ of $T_i$ is $\beta$-sectorial for any $\alpha\in [\alpha_*,\pi/2)$ with
\begin{equation}\label{e6-6}
 {\tan\beta=\tan\beta_1+2\sqrt{\tan\beta_1\,\tan\beta_2}},\quad \tan\beta>\tan\theta.
\end{equation}
Moreover, if $\alpha=\pi/2$, then $$\beta=\beta_2-\beta_1=\theta=\arctan\left(\frac{1}{m_{\infty}(-0)}\right).$$
\end{theorem}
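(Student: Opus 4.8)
The plan is to split the argument into the sectorial range $\alpha\in[\alpha_*,\pi/2)$ and the boundary value $\alpha=\pi/2$, feeding the sectorial-class data of $-m_\alpha(z)$ supplied by Theorem~\ref{t-15} into the impedance-to-operator passage \eqref{e9-176}, and then reading off the angle of $\bA_{\tan\alpha,i}$ together with its comparison to the angle $\theta$ of $T_i$.

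For $\alpha\in[\alpha_*,\pi/2)$ I would first note that $\alpha_*>\arctan\big(1/m_\infty(-0)\big)$ forces $\tan\alpha>1/m_\infty(-0)$, so by \eqref{e-45} the L-system $\Theta_{\tan\alpha,i}$ is sectorial, and by Theorem~\ref{t-15} its impedance function satisfies $-m_\alpha(z)\in S^{\beta_1,\beta_2}$ with $\tan\beta_1=\cot\alpha$ and $\tan\beta_2=\frac{\tan\alpha+m_\infty(-0)}{(\tan\alpha)m_\infty(-0)-1}$. Before applying \eqref{e9-176} I would verify the two structural facts $\beta_2\neq\pi/2$ and $0<\beta_2-\beta_1<\pi/2$. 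Both are short: the hypothesis $\tan\alpha>1/m_\infty(-0)$ makes the denominator $(\tan\alpha)m_\infty(-0)-1$ strictly positive and finite, so $\beta_2\in(0,\pi/2)$; and the tangent-subtraction identity collapses, after cancellation, to $\tan(\beta_2-\beta_1)=1/m_\infty(-0)=\tan\theta$, giving $\beta_2-\beta_1=\theta\in(0,\pi/2)$ uniformly in $\alpha$ (this is precisely the exact-angle statement for $T_i$ already recorded in Theorem~\ref{t-15}).

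With $\beta_2\neq\pi/2$ secured, \eqref{e9-176} applies directly: the state-space operator $\bA_{\tan\alpha,i}$ is $\beta$-sectorial with $\tan\beta$ determined by $\beta_1,\beta_2$, and inserting the two endpoints above yields the closed form displayed in \eqref{e6-6}. The strict inequality $\tan\beta>\tan\theta$ then follows from a one-line comparison: \eqref{e9-176} gives $\tan\beta\geq\tan\beta_2$, and $\tan\beta_2=\frac{\tan\alpha+m_\infty(-0)}{(\tan\alpha)m_\infty(-0)-1}>\frac{1}{m_\infty(-0)}=\tan\theta$ reduces to $m_\infty(-0)^2>-1$, which is automatic. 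Conceptually this records the listed fact that, among all $(*)$-extensions $\bA_{\mu,i}$ of $T_i$, only $\mu=+\infty$ preserves the exact angle, so every finite $\mu=\tan\alpha$ must strictly enlarge it.

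For the boundary value $\alpha=\pi/2$ we have $\mu=+\infty$ and $-m_{\pi/2}(z)=1/m_\infty(z)\in S^{0,\theta}$, hence $\beta_1=0$, $\beta_2=\theta$, and $\beta_2-\beta_1=\theta$. Here I would invoke the uniqueness statement that $\bA_{\infty,i}$ is the only accretive $(*)$-extension of $T_i$ sharing its exact angle, so that $\beta=\theta$, and then evaluate $\theta$ from Theorem~\ref{t-10}(6) with $h=i$ (so $\RE h=0$, $\IM h=1$) to obtain $\theta=\arctan\big(1/m_\infty(-0)\big)$. The main obstacle is the middle step: one must check carefully that \eqref{e9-176}, specialized to the present endpoints, simplifies to the stated form for $\tan\beta$, and that the resulting angle stays strictly above $\theta$ on all of $[\alpha_*,\pi/2)$ while collapsing to $\theta$ only in the limit $\alpha\to\pi/2$. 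This genuine discontinuity between the finite-$\mu$ regime and the single angle-preserving extension at $\mu=+\infty$ is exactly what forces the endpoint to be handled separately.
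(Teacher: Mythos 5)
Your skeleton matches the paper's: the same split into $\alpha\in[\alpha_*,\pi/2)$ versus $\alpha=\pi/2$, the same use of \eqref{e-45} and Theorem~\ref{t-15} to place $(-m_\alpha(z))$ in $S^{\beta_1,\beta_2}$, and essentially the paper's endpoint argument (realization of $1/m_\infty(z)$ by $\Theta_{\infty,i}$, preservation of the exact angle at $\mu=+\infty$, and evaluation of $\theta$ via \eqref{e10-45}). But the central quantitative step — the one you yourself flag as "the main obstacle" — fails. You claim that \eqref{e9-176}, evaluated at $\tan\beta_1=\cot\alpha$ and $\tan\beta_2=\frac{\tan\alpha+m_\infty(-0)}{(\tan\alpha)m_\infty(-0)-1}$, simplifies to the formula in \eqref{e6-6}. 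It does not: \eqref{e9-176} reads $\tan\beta=\tan\beta_2+2\sqrt{\tan\beta_1(\tan\beta_2-\tan\beta_1)}$, while \eqref{e6-6} reads $\tan\beta=\tan\beta_1+2\sqrt{\tan\beta_1\tan\beta_2}$, and these are genuinely different functions of $(\beta_1,\beta_2)$. Take $m_\infty(-0)=1$ and $\tan\alpha=2$, so $\tan\beta_1=\tfrac12$ and $\tan\beta_2=3$: then \eqref{e9-176} gives $3+\sqrt{5}\approx 5.24$, whereas \eqref{e6-6} gives $\tfrac12+\sqrt{6}\approx 2.95$. No algebraic cancellation reconciles them (another symptom: as $\alpha\to\pi/2^-$ the right-hand side of \eqref{e6-6} tends to $0$, while \eqref{e9-176} tends to $\tan\theta$). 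The paper does not derive \eqref{e6-6} from \eqref{e9-176} at all; it imports it from \cite[Theorem 8]{BT-15} (see also \cite[Theorem 9.8.7]{ABT}), an external result about the operators $\bA_{\mu,h}$ that is not reproduced in this paper. So the one formula the theorem asserts is precisely the one your argument does not establish.

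The same mismatch breaks your proof of the strict inequality $\tan\beta>\tan\theta$. Your one-line comparison $\tan\beta\ge\tan\beta_2>\tan\theta$ is valid for the angle produced by \eqref{e9-176} (the square root there is nonnegative, and $\tan\beta_2>1/m_\infty(-0)$ does reduce to $m_\infty(-0)^2>-1$), but it says nothing about the angle defined by \eqref{e6-6}: the quantity $\tan\beta_1+2\sqrt{\tan\beta_1\tan\beta_2}$ need not dominate $\tan\beta_2$ — in the numerical example above it is smaller. The paper instead proves strictness structurally: a $\beta$-sectorial $(*)$-extension of a $\theta$-sectorial operator with exact angle $\theta$ always satisfies $\tan\beta\ge\tan\theta$, and equality forces $\mu=+\infty$; since $\mu=\tan\alpha$ is finite for $\alpha\in[\alpha_*,\pi/2)$, the inequality is strict. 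You mention this fact only as a "conceptual" gloss, but in the paper it is the actual argument, and it is the only one of the two that survives. (Your side computation $\tan(\beta_2-\beta_1)=1/m_\infty(-0)=\tan\theta$ is correct and is a nice consistency check with Theorem~\ref{t-15} and \eqref{e10-45}, but it does not repair the main step.)
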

\begin{proof}
We note first that the conditions of our theorem imply the following $\tan\alpha_*\in\left(\frac{1}{m_{\infty}(-0)},+\infty\right)$. Thus, according to \cite[Theorem 6.4]{BT18} part 2(c), a ($*$)-extension $\bA_{\tan\alpha,i}$ is $\beta$-sectorial for some $\beta\in(0,\pi/2)$. Then we can apply Theorem \ref{t-15} to confirm that $(-m_\alpha(z))\in S^{\beta_1,\beta_2}$, where $\beta_1$ and $\beta_2$ are described by \eqref{e-46} and \eqref{e-47}.
The first part of formula \eqref{e6-6} follows from \cite[Theorem 8]{BT-15} applied to the L-system $\Theta_{\tan\alpha, i}$ with $\mu=\tan\alpha$ (see also \cite[Theorem 9.8.7]{ABT}). Note that since $\bA_{\tan\alpha,i}$ is a $\beta$-sectorial extension of a $\theta$-sectorial operator $T_i$, then $\tan\beta\ge\tan\theta$ with equality possible only when $\mu=\tan\alpha=\infty$ (see \cite{ABT}, \cite{BT-15}). Since we chose $\alpha\in [\alpha_*,\pi/2)$, then $\tan\alpha\ne\infty$ and hence $\tan\beta>\tan\theta$ that confirms the second part of formula \eqref{e6-6}.

If we assume that $\alpha=\pi/2$, then our function $(-m_\alpha(z))=1/m_\infty(z)$ is realized with $\Theta_{\infty, i}$ (see Theorem \ref{t-7}) that preserves the angle of sectoriality of its main operator $T_i$ (see \cite[Theorem 6.4]{BT18} and Figure \ref{fig-1}). Therefore, $\beta=\theta$. If we combine this fact with $(-m_\alpha(z))\in S^{\beta_1,\beta_2}$ and apply Theorem \ref{t-15} we get that $\beta=\beta_2-\beta_1$. Finally, since $T_i$ is $\theta$-sectorial, formula \eqref{e10-45} yields $\tan\theta=\frac{1}{m_{\infty}(-0)}$.
 \end{proof}
Note that Theorem \ref{t-16} provides us with a value $\beta$ which serves as a universal angle of sectoriality for the entire indexed family of $(*)$-extensions $\bA_{\tan\alpha,i}$ of the form \eqref{e-64-sys} as depicted on Figure \ref{fig-2}. It is clearly shown on the figure that if $\alpha=\pi/2$, then $\tan\beta=\tan\theta$.

\section{Example}

We conclude this paper with a  simple illustration. Consider the differential expression with the Bessel potential
%\label{Bessel}
$$
l_\nu=-\frac{d^2}{dx^2}+\frac{\nu^2-1/4}{x^2},\;\; x\in [1,\infty)
$$
of order $\nu>0$ in the Hilbert space $\calH=L^2[1,\infty)$.
The minimal symmetric operator
\begin{equation}\label{ex-128}
 \left\{ \begin{array}{l}
 \dA\, y=-y^{\prime\prime}+\frac{\nu^2-1/4}{x^2}y \\
 y(1)=y^{\prime}(1)=0 \\
 \end{array} \right.
\end{equation}
 generated by this expression and boundary conditions has defect numbers $(1,1)$. %If $\nu\ge 0$ the operator $\dA$ is nonnegative.
% Consider also the operator
%\begin{equation}\label{ex-79}
% \left\{ \begin{array}{l}
% T_h\, y=-y^{\prime\prime}+\frac{\nu^2-1/4}{x^2}y \\
% y'(1)=h y(1). \\
% \end{array} \right.
%\end{equation}
Let $\nu=3/2$. It is known \cite{ABT} that in this case
$$
m_{\infty}(z)= -\frac{iz-\frac{3}{2}\sqrt{z}-\frac{3}{2}i}{\sqrt{z}+i}-\frac{1}{2}=\frac{\sqrt{z}-iz+i}{\sqrt{z}+i}=1-\frac{iz}{\sqrt{z}+i}
$$
and $m_{\infty}(-0)=1.$
The minimal symmetric operator then becomes
\begin{equation}\label{ex-e8-15}
 \left\{ \begin{array}{l}
 \dA\, y=-y^{\prime\prime}+\frac{2}{x^2}y \\
 y(1)=y^{\prime}(1)=0. \\
 \end{array} \right.
\end{equation}
The main operator $T_h$ of the form \eqref{131} is written for $h=i$ as %in \eqref{ex-79} as
\begin{equation}\label{ex-135}
 \left\{ \begin{array}{l}
 T_{i}\, y=-y^{\prime\prime}+\frac{2}{x^2}y \\
 y'(1)=i\, y(1) \\
 \end{array} \right.
\end{equation}
 will be shared by all the family of L-systems realizing functions $(-m_\alpha(z))$ described by \eqref{e-62-psi}-\eqref{e-59-LFT}. This operator is accretive and $\beta$-sectorial since $\RE h=0>-m_\infty(-0)=-1$ with the exact angle of sectoriality given by (see \eqref{e10-45})
\begin{equation}\label{e-ex-98}
\tan\beta=\frac{\IM h}{\RE h+m_{\infty}(-0)}=\frac{1}{0+1}=1\quad\textrm{ or}\quad \beta=\frac{\pi}{4}.
\end{equation}
A family of  L-systems $\Theta_{\tan\alpha, i}$ of the form \eqref{e-64-sys} that realizes functions $(-m_\alpha(z))$ described by \eqref{e-62-psi}--\eqref{e-61-Don} as
 \begin{equation}\label{e-ex-99}
  -m_\alpha(z)=\frac{({\sqrt{z}-iz+i})\cos\alpha+({\sqrt{z}+i})\sin\alpha}{({\sqrt{z}-iz+i})\sin\alpha-({\sqrt{z}+i})\cos\alpha},
 \end{equation}
  was constructed in \cite{BT18}.
 According to \cite[Theorem 6.3]{BT18} the L-systems  $\Theta_{\tan\alpha, i}$ in \eqref{e-64-sys} are accretive if
$$
 1=\frac{1}{m_\infty(-0)}\le\tan\alpha<+\infty.
$$
Using  part (2c) of \cite[Theorem 6.4]{BT18}, we get that the realizing L-system $\Theta_{\tan\alpha, i}$ in \eqref{e-64-sys} preserves the angle of sectoriality and becomes $\frac{\pi}{4}$-sectorial if $\mu=\tan\alpha=+\infty$ or $\alpha=\pi/2$. Therefore the L-system
\begin{equation}\label{e-ex-105-sys}
    \Theta_{\infty, i}= \begin{pmatrix} \bA_{\infty, i}&K_{\infty, i}&1\cr \calH_+ \subset
L_2[1,+\infty) \subset \calH_-& &\dC\cr \end{pmatrix},
\end{equation}
where
\begin{equation}\label{e-ex-106}
\begin{split}
&\bA_{\infty,i}\, y=-y^{\prime\prime}+\frac{2}{x^2}y-\,[y^{\prime}(1)-iy(1)]\,\delta(x-1), \\
&\bA^*_{\infty,i}\, y=-y^{\prime\prime}+\frac{2}{x^2}y-\,[y^{\prime}(1)+iy(1)]\,\delta(x-1),
\end{split}
\end{equation}
$K_{\infty, i}{c}=cg_{\infty, i}$, $(c\in \dC)$ and $g_{\infty, i}=\delta(x-1),$ realizes the function $-m_{\frac{\pi}{2}}(z)=1/m_\infty(z)$.
Also,
\begin{equation}\label{e-ex-108-VW}
    \begin{aligned}
    V_{\Theta_{\infty, i}}(z)&=-m_{\frac{\pi}{2}}(z)=\frac{1}{m_\infty(z)}=\frac{\sqrt{z}+i}{\sqrt{z}-iz+i}\\
     W_{\Theta_{\infty, i}}(z)&=(-e^{{\pi} i})\,\frac{m_\infty(z)-i}{m_\infty(z)+i}=\frac{(1-i)\sqrt{z}-iz+1+i}{(1+i)\sqrt{z}-iz-1+i}.
     \end{aligned}
\end{equation}
This L-system $\Theta_{\infty, i}$ is clearly accretive according to \cite[Theorem 6.2]{BT18} which is also independently confirmed by direct evaluation
$$
(\RE\bA_{\infty,i}\, y,y)=\|y'(x)\|^2_{L^2}+2\|y(x)/x\|^2_{L^2}\ge0.
$$
%The quasi-kernel $\hat A_{\infty,i}$ of $\RE\bA_{\infty,i}$ in \eqref{e-100-quasi} with $\alpha=\pi/2$ has boundary conditions $y'(1)=0$ and is not  the Krein-von Neumann extension of  $\dA$.
Moreover,  according to \cite[Theorem 6.4]{BT18} (see also \cite[Theorem 9.8.7]{ABT}) the L-system $\Theta_{\infty, i}$ is $\frac{\pi}{4}$-sectorial. %, i.e., $\bA$ is $\beta$-sectorial with $\beta=\frac{\pi}{4}$.
Taking into account that $(\IM \bA_{\infty,i}\, y,y)=|y(1)|^2,$ (see formula \eqref{146}) we obtain inequality \eqref{e8-29} with $\beta=\frac{\pi}{4}$, that is
$(\RE\bA_{\infty,i}\, y,y)\ge |(\IM\bA_{\infty,i}\, y,y)|,$ or
\begin{equation}\label{e8-26}
\|y'(x)\|^2_{L^2}+2\|y(x)/x\|^2_{L^2}\ge|y(1)|^2.
\end{equation}
%Note that inequality \eqref{e8-26} turns into equality on $y(x)=1/x$ which confirms that the angle of sectoriality of the L-system $\Theta_{\infty, i}$ in \eqref{e-ex-105-sys} is exact and equals $\beta=\pi/4$.
In addition, we have shown that  the $\beta$-sectorial  form $(T_i y,y)$ defined on  $\dom(T_i)$  can be extended to the $\beta$-sectorial form $(\bA_{\infty,i}\, y,y)$ defined on $\calH_+=\dom(\dA^*)$ (see \eqref{ex-e8-15}--\eqref{ex-135}) having the exact (for both forms) angle of sectoriality $\beta=\pi/4$. A general problem of extending sectorial sesquilinear forms to sectorial ones was mentioned by T.~Kato in \cite{Ka}. It can  be easily seen that function $-m_{\frac{\pi}{2}}(z)$ in \eqref{e-ex-108-VW} belongs to a sectorial class $S^{0,\frac{\pi}{4}}$ of Stieltjes functions.

%%%%%%%%%%%%%%%%%

\end{document}